\documentclass[11pt]{amsart}
\usepackage{amsmath, amsthm, amssymb}
\usepackage{graphicx}
\usepackage[dvipsnames]{xcolor} 
\usepackage{srcltx} 
\usepackage{verbatim}
\usepackage{hyperref}
\hypersetup{
    colorlinks=true,
    linkcolor=blue,
    filecolor=magenta,      
    urlcolor=blue,
    citecolor = blue,
    }
\usepackage{tikz-cd}

\newtheorem{thm}{Theorem}[section]
\newcommand{\bt}{\begin{thm}}
\newcommand{\et}{\end{thm}}

\newtheorem{ex}[thm]{Example}

\newtheorem{cor}[thm]{Corollary}   
 
\newcommand{\bc}{\begin{cor}}
\newcommand{\ec}{\end{cor}}

\newtheorem{lem}[thm]{Lemma}   
\newcommand{\bl}{\begin{lem}}
\newcommand{\el}{\end{lem}}

\newtheorem{prop}[thm]{Proposition}
\newcommand{\bp}{\begin{prop}}
\newcommand{\ep}{\end{prop}}

\newtheorem{defn}[thm]{Definition}
\newcommand{\bd}{\begin{defn}}    
\newcommand{\ed}{\end{defn}}

\newtheorem{rmrk}[thm]{Remark}   

\newcommand{\br}{\begin{rmrk}}
\newcommand{\er}{\end{rmrk}}

\newtheorem*{*thm}{Theorem}

\newcommand{\be}{\begin{equation}}

\newcommand{\ee}{\end{equation}}

\newcommand{\injrad}{\textrm{injrad}}

\newcommand{\N}{\mathbb{N}}

\newcommand{\vare}{\varepsilon}

\newcommand{\diam}{\operatorname{Diam}}

\newcommand{\vol}{\operatorname{Vol}}

\newcommand{\lp}{\left (}
\newcommand{\rp}{\right )}

\newcommand{\Sp}{\mathbb{S}}      
\newcommand{\Tor}{\mathbb{T}}     

\def\XXint#1#2#3{{\setbox0=\hbox{$#1{#2#3}{\int}$ }
\vcenter{\hbox{$#2#3$ }}\kern-.6\wd0}}

\title[Sobolev Inequalities for Rough Riemannian Metrics]{Sobolev Inequalities Relating Rough Riemannian Metrics to their Distance Functions}

\author{Brian Allen}
\author{Lisa Naples}

\begin{document}

\maketitle

\begin{abstract}
In previous papers, the first named author and E. Bryden explored Sobolev like inequalities relating the norm of a Riemmanian metric $g_1$, in terms of a background Riemannian metric $g_0$, in order to estimate the distance function $d_{g_1}$. Although $|g_1|_{g_0}$ is not the gradient of the distance function $d_{g_1}$, it was shown that a Morrey and Sobolev inequalities hold in their appropriate domains. In this paper we see that these results can be extended to the case where $g_1$ is a Rough Riemannian metric, something roughly analogous to an $L^{\infty}$ metric. We show that the proof of the Morrey and Sobolev inequality go through, with appropriate adjustments, as well as convergence results. We also adapt previous examples from the literature to the rough Riemannian metric case which show the hypotheses from the main theorems are necessary.
\end{abstract}

\section{Introduction}

Sobolev inequalities relating Sobolev spaces $W^{k,p}$ to higher integrability in $L^q$ or higher regularity in $C^{k,\alpha}$ are an indispensable tool to studying existence and regularity of weak solutions to partial differential equations. If we consider two Riemannian metrics $g_0, g_1$ defined on $M^m$ with corresponding distance functions $d_{g_0}, d_{g_1}$ then one could hope to establish Sobolev like inequalities between $d_{g_1}$ and $|g_1|_{g_0}$, by imagining $|g_1|_{g_0}$ as the norm of the gradient of $d_{g_1}$. It was precisely this idea which was explored by the first named author and E. Bryden in a series of papers \cite{Allen-Bryden, Allen, Allen-Bryden-Second}. There we were able to establish Sobolev ($1< p < $m) and Morrey ($p > m$) inequalities where $g_1$ was assumed to be a continuous Riemannian metric and $g_0$ is assumed smooth. Two important questions can then immadietely be raised: 1. How far can one weaken the regularity of $g_1$? 2. How far can one weaken the regularity of $g_0$? 

In this paper we address the first question by proving versions of the previously established Sobolev and Morrey inequalities where $g_1$ is only assumed to be a rough Riemannian metric, which is morally like assuming $g_1$ is an $L^{\infty}$ metric, defined precisely in section \ref{sec-Background}.  Rough Riemannian metrics were defined and studied by L. Bandera and several collaborators in a series of papers \cite{BandaraBryan, BandaraHassan, BNR, BandaraRough}, establishing many important properties and demonstrating the utility of this class of geometric objects. More recently, the first named author, B. Falcao, H. Pacheco, and B. Sanchez \cite{LehmanUndergrad2}, a team of undergraduate students at Lehman College,  also studied convergence of sequences of rough Riemannian metrics, continuing the study of this important class of geometric objects. 

One of the main technical hurdles one must overcome when proving a Sobolev inequality relating $|g_1|_{g_0}$ to $d_{g_1}$ is how to relate distances, which come from minimizing lengths of curves, to integrals of $|g_1|_{g_0}$ over open sets in $M$. The main tool employed in \cite{Allen, Allen-Bryden-Second} was called symmetric families of curves connecting $x$ to $y$ for $x,y \in M$, depicted in figure \ref{fig:SymmetricFamilyCurves}. The symmetric family of curves has a length minimizing geodesic with respect to $g_0$, $\alpha$, at its core and uses Jacobi fields to then build an open set, foliated by curves connecting $x$ to $y$. We use the fact that $g_0$ is assumed to be smooth in order to control the geometry of the symmetric families of curves, making sure that they can be chosen in a uniform way for all $x,y \in M$. 

Given the uniform control from $g_0$ on the symmetric families of curves connecting $x$ to $y$ we can then move on to estimating $d_{g_1}$ in terms of $|g_1|_{g_0}$. When $g_1$ was assumed to be continuous, the main idea of the proofs was to start with lengths of curves integrated over the symmetric family of curves, thought of as a parametrization space foliated by these curves. This allows us to estimate the distance function $d_{g_1}$ in terms of an integral over the parameterization space of a family of curves. The main issue when adapting this proof to the case where $g_1$ is a rough Riemmanian metric is that their corresponding distance functions cannot be defined in terms of minimizing the length of curves. Instead, it can be defined as the supremum of $|w(x)-w(y)|$ for functions $w$ on $M$ so that $|\nabla w|_{g_1} \le 1$ a.e. Hence the main work of this paper is to adapt the proof of \cite{Allen, Allen-Bryden} to this new definition of distance function. This requires considering $w \circ \eta$ where $w$ is a function almost realizing the distance function $d_{g_1}$ and $\eta$ is a curve in the foliation of the symmetric family of curves. We show that the previous proofs can be adapted to the case of $w \circ \eta$ defined on symmetric families of curves.

\begin{figure}
 \label{fig:SymmetricFamilyCurves}
 \begin{tikzpicture}[scale=4]
  \draw[thick] (-1,1) -- (1,1);
  \draw[fill] (-1,1) circle [radius=0.025];
  \node[left, outer sep=2pt] at (-1,1) {$x$};
  \draw[fill] (1,1) circle [radius=0.025];
  \node[right, outer sep=2pt] at (1,1) {$y$};
  \node[right, outer sep=2pt] at (0,1-1/16) {$\alpha$};
   \draw plot [smooth] coordinates {(-1,1)(-.5,1.1)(0,1.15)(.5,1.1)(1,1)};
  \draw plot [smooth] coordinates {(-1,1)(-.5,1-.1)(0,1-.15)(.5,1-.1)(1,1)};
  \draw plot [smooth] coordinates {(-1,1)(-.5,1.2)(0,1.3)(.5,1.2)(1,1)};
  \draw plot [smooth] coordinates {(-1,1)(-.5,1-.2)(0,1-.3)(.5,1-.2)(1,1)};
  \draw plot [smooth] coordinates {(-1,1)(-.5,1.27)(0,1.4)(.5,1.27)(1,1)};
  \draw plot [smooth] coordinates {(-1,1)(-.5,1-.27)(0,1-.4)(.5,1-.27)(1,1)};
  \draw plot [smooth] coordinates {(-1,1)(-.5,1.35)(0,1.5)(.5,1.35)(1,1)};
  \draw plot [smooth] coordinates {(-1,1)(-.5,1-.35)(0,1-.5)(.5,1-.35)(1,1)};
 \end{tikzpicture} 
 \end{figure}

The first main theorem shows that the Theorem 1.1 of \cite{Allen-Bryden-Second} holds for rough Riemannian metrics. In particular a Sobolev inequality relating the $L^{\frac{p}{2}}$ norm of a rough Riemannian metric to the $L^q$ norm of its corresponding distance function.

\begin{thm}\label{Main Sobolev Ineq}
    Let $M^m$ be a smooth, connected, closed, and oriented manifold and $(M,g)$ be a fixed Riemannian manifold with metric $g_0$, and let $g_1$ be a rough Riemannian metric on $M$. Then, for $p < m$ we have
    \begin{equation}
        \|d_{g_1}(x,y)\|_{L^{q}(M\times M, g_0\oplus g_0)}\le C(M,g_0,q)\|g_1\|^{\frac{1}{2}}_{L^{\frac{p}{2}}(M,g_0)},
    \end{equation}
    with $q<\frac{mp}{m-p}$.
\end{thm}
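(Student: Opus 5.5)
We follow the strategy of \cite{Allen-Bryden-Second}, replacing lengths of curves by the dual definition of the distance: $d_{g_1}(x,y)=\sup\{|w(x)-w(y)| : w\in W^{1,\infty}(M),\ |\nabla w|_{g_1}\le 1 \text{ a.e.}\}$. The first step is to fix, for each pair $(x,y)\in M\times M$, a symmetric family of curves $\{\eta_{x,y,v}\}_{v\in V}$ built from a $g_0$-minimizing geodesic $\alpha$ and Jacobi fields. Because $g_0$ is smooth and $M$ is closed, the parameter space $V$ (a ball of radius depending only on $(M,g_0)$) and the Jacobian of the map $\Phi_{x,y}:(t,v)\mapsto \eta_{x,y,v}(t)$ can be chosen uniformly. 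The resulting estimate has the form $J\Phi_{x,y}(t,v)\ge c\,\min(t,1-t)^{m-1}d_{g_0}(x,y)^{m-1}$ with $|\eta'|_{g_0}\le C d_{g_0}(x,y)$. These geometric facts depend only on $g_0$, so they carry over verbatim from \cite{Allen, Allen-Bryden-Second}.

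Next, fix $\varepsilon>0$ and a $1$-Lipschitz (w.r.t.\ $g_1$) function $w$ with $d_{g_1}(x,y)\le |w(x)-w(y)|+\varepsilon$. For each $v$, the composition $w\circ\eta_v$ is Lipschitz in $t$, and a.e.\ we have
\[
|(w\circ \eta_v)'(t)| \le |\nabla w|_{g_1}\,|\eta_v'|_{g_1}\le |g_1|_{g_0}^{1/2}(\eta_v(t))\,|\eta_v'(t)|_{g_0}.
\]
The main obstacle is that $w$ is only differentiable a.e.\ and the rough metric $g_1$ is only measurable, so the chain rule along a single curve may fail on a null set that the curve happens to lie in. I would handle this by averaging over $v$. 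Since $\Phi_{x,y}$ is a diffeomorphism away from the endpoints, the set of $(t,v)$ with $\eta_v(t)$ in a $g_0$-null set is itself null. Hence for a.e.\ $v$ the inequality above holds for a.e.\ $t$. To make this rigorous, I would first prove it for smooth $w_k\to w$, obtained by mollifying in charts, with $|\nabla w_k|_{g_1}$ controlled in the integrated sense, and then pass to the limit using Fubini. Alternatively, I would use that the Lipschitz bound gives absolute continuity on a.e.\ curve of the family, via a Fuglede-type argument.

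Integrating over $t\in[0,1]$ and averaging over $v\in V$ then gives
\[
d_{g_1}(x,y)-\varepsilon\le \frac{1}{|V|}\int_V\int_0^1 |g_1|_{g_0}^{1/2}(\eta_v(t))\,|\eta_v'|_{g_0}\,dt\,dv .
\]
Letting $\varepsilon\to0$ and changing variables to $M$ through $\Phi_{x,y}$ bounds $d_{g_1}(x,y)$ by a Riesz-potential type integral:
\[
d_{g_1}(x,y)\le C\int_M |g_1|_{g_0}^{1/2}(z)\left(\frac{1}{d_{g_0}(x,z)^{m-1}}+\frac{1}{d_{g_0}(y,z)^{m-1}}\right)dV_{g_0}(z).
\]
This is obtained by splitting each curve at $t=1/2$ and using the Jacobian estimate on each half.

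Finally, apply the Hardy--Littlewood--Sobolev (fractional integration) inequality on the compact manifold to $f=|g_1|_{g_0}^{1/2}\in L^p$. The potential $I_1 f(x)=\int_M f(z)\,d_{g_0}(x,z)^{1-m}dz$ satisfies $\|I_1f\|_{L^q(M)}\le C\|f\|_{L^p}$ for $q<\frac{mp}{m-p}$; on a closed manifold one can also argue by Young's inequality with the kernel in $L^{s}$ for $s<\frac{m}{m-1}$. Integrating in the remaining variable over the finite-volume factor gives the $L^q(M\times M)$ bound. Since $\|f\|_{L^p}=\|g_1\|_{L^{p/2}}^{1/2}$, this yields the theorem.
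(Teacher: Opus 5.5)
Your proposal is correct and is essentially the paper's argument. It takes a near-optimal Lipschitz $w$, integrates the bound $|(w\circ\eta)'|\le |g_1|_{g_0}^{1/2}|\eta'|_{g_0}$ over a uniformly chosen symmetric family of curves, uses the coarea change of variables to obtain the potential estimate of Lemma \ref{lem: distance potential estimate}, and then applies Proposition \ref{PotentialOperatorEstimate} together with the finite $g_0$-volume of the other factor.

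Your Fubini observation is by itself a complete justification, and it is more careful than the paper's Lemma \ref{lem-RRMPreSetup}, which asserts the bound along every curve. The parametrization of the family is a diffeomorphism away from $x,y$, so almost every curve meets the null sets (where $w$ is not differentiable, or where $|\nabla^{g_1}w|_{g_1}\le 1$ fails) only in a $dt$-null set, and $w\circ\eta$ is Lipschitz, hence absolutely continuous. The mollification and Fuglede fallbacks are therefore unnecessary.

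There is one harmless slip. With $t\in[0,1]$, the Jacobian is of order $d_{g_0}(x,y)^{m}\min(t,1-t)^{m-1}$, not $d_{g_0}(x,y)^{m-1}\min(t,1-t)^{m-1}$. The ratio $|\eta'|_{g_0}/J\Phi_{x,y}$ still produces the kernel $d_{g_0}(x,z)^{1-m}$ up to constants.
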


Similarly, if we increase the assumed integrability of the norm of the Riemannian metric we obtain a Morrey like inequality.

\begin{thm}\label{MainThmEst}
Let $M^m$ be a smooth, connected, closed, and oriented manifold, $g_0$ a smooth Riemannian metric, and $g_1$ a rough Riemannian metric. If there exists  $p > m$ so that
 \begin{align}
 \quad \|g_1\|_{L_{g_0}^{\frac{p}{2}}(M)} \le C,
 \end{align}
  then 
\begin{align}
d_{g_1}(x,y) \le C'(M,g_0,C) d_{g_0}(x,y)^{\frac{p-m}{p}},\quad \forall x,y \in M.
\end{align}

\end{thm}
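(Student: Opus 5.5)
We prove Theorem \ref{MainThmEst} by reducing it to a one-dimensional estimate along curves, following the strategy of \cite{Allen, Allen-Bryden}. The difference here is that $d_{g_1}$ is controlled through test functions instead of through lengths of curves. Recall that for a rough metric
\[
d_{g_1}(x,y)=\sup\{|w(x)-w(y)| : w \text{ Lipschitz}, \ |\nabla w|_{g_1}\le 1 \text{ a.e.}\}.
\]
So it is enough to show that every admissible $w$ satisfies $|w(x)-w(y)|\le C' d_{g_0}(x,y)^{\frac{p-m}{p}}$, with $C'$ independent of $w$.

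\textbf{Step 1 (reduction to small distances).} Since $M$ is compact, we may assume $d_{g_0}(x,y)\le \delta$, where $\delta=\delta(M,g_0)$ is below the injectivity radius. For larger distances, we chain finitely many points spaced $\delta$ apart along a $g_0$-geodesic and apply the triangle inequality, at the cost of a constant depending on $\diam(M,g_0)/\delta$.

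\textbf{Step 2 (pointwise control of $w$ along curves).} A rough metric satisfies $g_1\ge c\,g_0$ locally, so an admissible $w$ is Lipschitz with respect to $g_0$. By Rademacher's theorem it is differentiable a.e. Along a $C^1$ curve $\eta$ we have $|(w\circ\eta)'(t)|\le |\nabla w|_{g_1}|\eta'|_{g_1}\le |g_1|_{g_0}^{1/2}|\eta'|_{g_0}$ wherever this makes sense. The problem is that $\nabla w$ is only defined a.e., so this need not hold along any particular curve. To get around this, we take the symmetric family of curves $\eta_s$, $s$ in a parameter set $S\subset \mathbb{R}^{m-1}$, connecting $x$ to $y$ and foliating an open set. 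The map $\Phi(s,t)=\eta_s(t)$ is a diffeomorphism off the endpoints, with Jacobian comparable to $|s|^{m-1}$-type weights; these Jacobi field estimates are uniform because $g_0$ is smooth. By Fubini, for a.e. $s$ the curve $\eta_s$ meets the non-differentiability set of $w$ in a set of $t$-measure zero. For such $s$, $w\circ\eta_s$ is Lipschitz, and the bound holds a.e. in $t$. This step is the main obstacle. If the direct approach fails, we will instead mollify $w$ in $g_0$-charts and pass to the limit using the $L^{p/2}$ integrability of $g_1$.

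\textbf{Step 3 (averaging and Hölder).} For a.e. $s$, the previous step gives
\[
|w(x)-w(y)|\le \int_0^{1}|g_1|_{g_0}^{1/2}(\eta_s(t))\,|\eta_s'(t)|_{g_0}\,dt .
\]
We average over $s\in S$ and change variables through $\Phi$, splitting the domain into the halves near $x$ and near $y$. With $d=d_{g_0}(x,y)$, the Jacobian near an endpoint behaves like $r^{m-1}$ at distance $r$ from it, so the bound becomes
\[
|w(x)-w(y)|\le C\int_{U}|g_1|_{g_0}^{1/2}(z)\,d_{g_0}(z,\text{endpoint})^{1-m}\,dV_{g_0}(z),
\]
where $U$ is the foliated region, which has diameter comparable to $d$. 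Applying Hölder with exponents $p$ and $p'=p/(p-1)$ yields $\|g_1\|_{L^{p/2}}^{1/2}\big(\int_0^{Cd} r^{(1-m)p'}r^{m-1}dr\big)^{1/p'}$. The radial integral converges exactly when $p>m$, and it equals $C d^{(p-m)/p}$.

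Since $\|g_1\|_{L^{p/2}_{g_0}}\le C$, the bound is uniform in $w$. Taking the supremum over admissible $w$ then gives $d_{g_1}(x,y)\le C' d_{g_0}(x,y)^{\frac{p-m}{p}}$.
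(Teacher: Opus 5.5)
Your proposal is correct and follows essentially the same route as the paper's proof. Both use a near-extremal admissible Lipschitz $w$, the fundamental theorem of calculus along the curves of a symmetric family, the bound $|(w\circ\eta)'|\le |g_1|_{g_0}^{1/2}|\eta'|_{g_0}$, the coarea weight $(L\sin(\pi t/L))^{1-m}$ near the endpoints, and H\"older with exponent $p$, whose dual integral converges exactly when $p>m$ and produces the factor $d_{g_0}(x,y)^{(p-m)/p}$. Your Fubini argument in Step 2 works as stated, so the mollification fallback is not needed, and it is a more careful treatment of the a.e.\ issue than the paper's Lemma \ref{lem-RRMPreSetup}; your Step 1 reduction is harmless but unnecessary, since Lemma \ref{MainToolsForProof} already gives symmetric families of uniform width for all pairs $x,y$.
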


\begin{rmrk}
    By applying the Arzel\`{a}-Ascoli Theorem one can obtain compactness from the H\"{o}lder estimate in Theorem \ref{MainThmEst} in a standard way where one can see the details worked out in \cite{Allen}.
\end{rmrk}

We can adapt the Sobolev inequality of Theorem \ref{Main Sobolev Ineq} in order to prove a convergence result for sequences of Riemannian manifolds.

\begin{thm}\label{MainTheoremConv}
Let $M^m$ be a smooth, connected, closed, and oriented manifold, $g_0$ a smooth Riemannian manifold, and $g_j$ a sequence of rough Riemannian metrics. If
\begin{align}
    g_j(v,v) \ge \left(1-\frac{1}{j} \right) g_0(v,v), \quad \forall p\in M, v \in T_pM,
\end{align}
and
\begin{align}
    \|g_j-g_0\|_{L^{\frac{p}{2}}_{g_0}(M)}\rightarrow 0,\quad 1 \le p\le m,
\end{align}
then there exists a subsequence, $g_k$, and a measurable set $U \subset M$ so that $\vol_{g_0}(U)=\vol_{g_0}(M)$, $\mathrm{dim}_{\mathcal{H}_{g_0}}(M \setminus U)\le m-p$, and
\begin{align}
    d_{g_k}(x,y) \rightarrow d_{g_0}(x,y), \quad \forall x,y \in U.
\end{align}
Furthermore, for every $\varepsilon > 0$ we can find a measurable set $U_{\varepsilon}\subset M$ so that $\vol_{g_0}(M \setminus U_{\varepsilon})< \varepsilon$ and 
\begin{align}
    d_{g_k}(x,y) \rightarrow d_{g_0}(x,y),
\end{align}
uniformly $\forall x,y \in U_{\varepsilon}$.
\end{thm}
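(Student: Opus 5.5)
The proof splits into a lower bound and an upper bound for $d_{g_k}$. For the lower bound, the hypothesis $g_j \ge (1-\tfrac1j) g_0$ lets us compare dual Lipschitz classes. Write $d_{g}(x,y) = \sup\{|w(x)-w(y)| : |\nabla w|_{g}\le 1 \text{ a.e.}\}$. If $|\nabla w|_{g_0}\le 1$, then $|\nabla w|_{g_j}\le (1-\tfrac1j)^{-1/2}$, since $g_j^{-1}\le (1-\tfrac1j)^{-1}g_0^{-1}$. Rescaling $w$ by $(1-\tfrac1j)^{1/2}$ therefore gives
\begin{align}
d_{g_j}(x,y)\ge \left(1-\tfrac1j\right)^{1/2} d_{g_0}(x,y)\quad \forall x,y\in M.
\end{align}
Hence $\liminf_j d_{g_j}\ge d_{g_0}$ everywhere. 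The whole difficulty is the upper bound $\limsup d_{g_k}\le d_{g_0}$ off a small set.

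For the upper bound I would reuse the symmetric family of curves from the proof of Theorem \ref{Main Sobolev Ineq}. Fix $x,y$ and the symmetric family $\eta_v$ built around the $g_0$-geodesic $\alpha$. Take $w$ with $|\nabla w|_{g_k}\le 1$ a.e. that nearly realizes $d_{g_k}(x,y)$; since it is Lipschitz, for a.e. parameter $v$
\begin{align}
|w(x)-w(y)|\le \int |\eta_v'|_{g_k}\,dt \le \int |\eta_v'|_{g_0}\,dt + \int \big||\eta_v'|_{g_k}-|\eta_v'|_{g_0}\big|\,dt .
\end{align}
The second term is bounded by $C\int |g_k-g_0|_{g_0}^{1/2}(\eta_v(t))\,dt$, using $|\sqrt a-\sqrt b|\le\sqrt{|a-b|}$. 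Averaging over $v$ in the parameter space and changing variables with the Jacobian of the family, as in the Sobolev proof, the first term tends to $d_{g_0}(x,y)$ as the family shrinks to $\alpha$. The error term becomes a Riesz-potential-type quantity
\begin{align}
E_k(x,y)\le C\int_M |g_k-g_0|_{g_0}^{1/2}(z)\left(d_{g_0}(x,z)^{1-m}+d_{g_0}(y,z)^{1-m}\right)dV_{g_0}(z).
\end{align}
This is exactly the bound that yields Theorem \ref{Main Sobolev Ineq} when applied to $f_k=|g_k-g_0|^{1/2}_{g_0}\in L^p$ with $\|f_k\|_{L^p}\to0$. We get $\int_{M\times M}E_k^q\to0$, so a subsequence $E_k\to0$ for a.e. $(x,y)$. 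The key step is to make the exceptional set a product $U\times U$: $E_k(x,y)\le I f_k(x)+I f_k(y)$, where $If(x)=\int f(z)d(x,z)^{1-m}dz$ is the Riesz potential. So it suffices to have $If_k(x)\to0$ pointwise on $U$.

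The main obstacle is the Hausdorff dimension bound on $M\setminus U$. The plan is to pass to a subsequence with $\sum_k \|f_k\|_{L^p}<\infty$ and set $F=\sum_k f_k\in L^p$. Standard potential theory (capacity estimates for Riesz potentials of $L^p$ functions, as in Adams–Hedberg or Evans–Gariepy for $W^{1,p}$) gives that $IF<\infty$ outside a set of $(1,p)$-capacity zero, and such sets have Hausdorff dimension at most $m-p$. On $U=\{IF<\infty\}$ the tails of $\sum_k If_k$ converge, so $If_k(x)\to0$ there. The case $p=m$ needs care but is handled the same way with dimension bound $0$. The uniform statement follows from Egorov's theorem applied to $If_k$ on $U$, giving $U_\varepsilon$ on which $If_k\to0$ uniformly. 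Combined with the uniform lower bound, this yields uniform convergence of $d_{g_k}\to d_{g_0}$ on $U_\varepsilon\times U_\varepsilon$.

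Two points need checking. First, the family width must shrink so that the main term tends to $d_{g_0}$ while the Jacobian bounds stay uniform. Second, the averaging must go through $w\circ\eta_v$, because no length functional is available for $g_k$. The a.e. bound on $|\nabla w|_{g_k}$ along curves is justified by Fubini over the foliation, as in the rough-metric adaptation of Theorem \ref{Main Sobolev Ineq}.
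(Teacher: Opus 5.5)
Your argument is correct, and its outer structure is the paper's. It has four pieces: the lower bound from $g_k\ge(1-\frac1k)g_0$; the estimate $d_{g_k}(x,y)\le d_{g_0}(x,y)+\varepsilon+C(\varepsilon)\bigl(If_k(x)+If_k(y)\bigr)$, obtained from a symmetric family of width $\delta(\varepsilon)$ and a near-optimal Lipschitz $w$; pointwise vanishing of $If_k$ off a small set; and Egorov.

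Where you genuinely differ is the key lemma producing the exceptional set. The paper passes to a subsequence with $\sum_k\|f_k\|^p_{L^p}<\infty$, applies Proposition \ref{Lp Hausdorff dimension control} to each $B(\delta)=\{\limsup_k If_k\ge\delta\}$, and removes $\bigcup_n B(1/n)$. You instead arrange $\sum_k\|f_k\|_{L^p}<\infty$, form the single function $F=\sum_k f_k\in L^p$, and take $U=\{IF<\infty\}$, where $\sum_k If_k=IF<\infty$ forces $If_k\to0$. The paper's route stays inside its own toolkit. Yours replaces that proposition by a classical fact about Riesz potentials and exhibits the exceptional set as one explicit null set.

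If you cite Evans--Gariepy, note that at $p=1$ the $W^{1,p}$-capacity framework does not apply directly, since for $F\in L^1$ the potential $IF$ need not lie in $W^{1,1}$. A direct argument covers all $1\le p\le m$. For $m-p<s<m$, the set $\{x:\limsup_{r\to0}r^{-s}\int_{B_r(x)}F^p>0\}$ is $\mathcal{H}^s$-null. Off that set, a dyadic-shell decomposition plus H\"older gives $IF(x)<\infty$, because $1+\frac{s-m}{p}>0$.

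Three of your local choices improve on the paper:
\begin{itemize}
\item You bound $\bigl||\eta'|_{g_k}-|\eta'|_{g_0}\bigr|$ by $|g_k-g_0|_{g_0}^{1/2}|\eta'|_{g_0}$. This removes the need for $g_k\ge g_0$, which Lemma \ref{lem: distance potential estimate with error} assumes but the hypothesis $g_k\ge(1-\frac1k)g_0$ does not supply.
\item You apply Egorov to the one-variable functions $If_k$. This yields uniform convergence on a genuine product $U_\varepsilon\times U_\varepsilon$.
\item Your Fubini remark supplies what Lemma \ref{lem-RRMPreSetup} glosses over, namely that the bound holds along almost every curve of the foliation.
\end{itemize}

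One wording fix: the constants do not stay uniform as the width shrinks. Averaging over $\Gamma_\delta$ brings in the factor $\delta^{1-m}$, so you must fix $\varepsilon$, send $k\to\infty$, and only then let $\varepsilon\to0$. Your argument already permits this order.
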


We believe that the results of this paper show that one should generally expect to be able to weaken similar convergence results, such as the VADB Theorem of the first named author, R. Perales, and C. Sormani \cite{ Allen-Perales-Sormani}, and future results of this type, to the case of rough Riemannian metrics as well. In a similar vein, one should be able to weaken the background metrics as well, as mentioned in question 2, but this is generally a much more delicate question. In particular, when attempting to answer this question one should not expect to be able to weaken the regularity of $g_0$ to anything even remotely as weak as rough Riemannian metrics or even continuous Riemannian metrics, because of the need to control the symmetric families of curves for all $x,y \in M$. If one wants to weaken the regularity of $g_0$ then one will have to keep an eye to the essential features of the symmetric families of curves which must be controlled to make the proofs work or come up with entirely different proof altogether.

In section \ref{sec-Background}, we review results from previous papers which will be needed here as well as the definition of rough Riemannian metrics and their properties. In section \ref{sec-Examples}, we adapt examples from the literature to the rough Riemannian case which show that the assumptions in the main theorems are necessary. In section \ref{sec-Poincare}, we collect the proofs of each of the main results.

\vspace{.5cm}

\textbf{AI Use Statement:} AI was not used in any way in the preperation of this document.

\section{Background}\label{sec-Background}

In this section we review the tools from the first named author's paper \cite{Allen} which we will use throughout the rest of the paper. 

\subsection{$L^p$ spaces for Riemannian metrics}
We start with a useful lemma for estimating a norm with respect to one Riemannian metric in terms of another.

\begin{lem}\label{NormComparisonLemma}
 For $v \in T_pM$ and $g_0,g_1$ Riemannian metrics on $M$ we have the inequality
 \begin{align}
 |v|_{g_1}\le |g_1|_{g_0}^{\frac{1}{2}}|v|_{g_0}.
 \end{align}
 \end{lem}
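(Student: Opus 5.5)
The plan is to diagonalize $g_1$ with respect to $g_0$ at the point $p$ and compare the largest eigenvalue with the full norm. Fix $p\in M$ and choose a $g_0$-orthonormal basis $\{e_1,\dots,e_m\}$ of $T_pM$ in which $g_1$ is diagonal. This is possible because $g_1(p)$ is a symmetric bilinear form and $g_0(p)$ is positive definite, so $g_1$ is represented by a $g_0$-self-adjoint endomorphism $A$, defined by $g_1(u,w)=g_0(Au,w)$, which has a $g_0$-orthonormal eigenbasis. Let $\lambda_1,\dots,\lambda_m$ be the corresponding eigenvalues. Each satisfies $\lambda_i=g_1(e_i,e_i)\ge 0$, and in fact $\lambda_i>0$ since $g_1$ is positive definite.

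In this basis the norm of $g_1$ with respect to $g_0$ is
\begin{align}
|g_1|_{g_0}^2=\sum_{i,j} g_1(e_i,e_j)^2=\sum_{i=1}^m \lambda_i^2 ,
\end{align}
so $|g_1|_{g_0}\ge \max_i \lambda_i$. Now write $v=\sum_i v^i e_i$. Then
\begin{align}
|v|_{g_1}^2=\sum_{i=1}^m \lambda_i (v^i)^2\le \Big(\max_i\lambda_i\Big)\sum_{i=1}^m (v^i)^2\le |g_1|_{g_0}\,|v|_{g_0}^2 .
\end{align}
Taking square roots gives the claimed inequality $|v|_{g_1}\le |g_1|_{g_0}^{1/2}|v|_{g_0}$.

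There is no real obstacle here. The only point needing care is to state clearly which convention for $|g_1|_{g_0}$ is in use: the tensor (Hilbert–Schmidt) norm induced by $g_0$, which dominates the operator norm $\max_i\lambda_i$. The argument is pointwise, so it applies equally when $g_1$ is a rough metric, holding at almost every $p$ where $g_1(p)$ is defined and positive definite.
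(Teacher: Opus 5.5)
Your proof is correct and complete. In a $g_0$-orthonormal eigenbasis of $g_1$ you get $|v|_{g_1}^2=\sum_i\lambda_i(v^i)^2\le(\max_i\lambda_i)\,|v|_{g_0}^2$, and $\max_i\lambda_i\le(\sum_i\lambda_i^2)^{1/2}=|g_1|_{g_0}$; this is the standard argument. The paper itself gives no proof of this lemma and simply recalls it from \cite{Allen}, so there is no different route to compare against.

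A slightly shorter route avoids diagonalizing. Apply Cauchy--Schwarz directly to $g_1(v,v)=\sum_{i,j}(g_1)_{ij}v^iv^j$ in any $g_0$-orthonormal frame:
\begin{align*}
g_1(v,v)\le\Big(\sum_{i,j}(g_1)_{ij}^2\Big)^{1/2}\Big(\sum_{i,j}(v^iv^j)^2\Big)^{1/2}=|g_1|_{g_0}\,|v|_{g_0}^2 .
\end{align*}
Your version has the small advantage of isolating the sharper operator-norm bound $\max_i\lambda_i$. That makes it clear the inequality also holds under the convention $|g_1|_{g_0}=\operatorname{tr}_{g_0}g_1$.

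Your remark that the argument is pointwise, and so holds almost everywhere for rough metrics, is also right. This is exactly how the lemma is used with $h$ in the paper's Lemma \ref{lem: distance potential estimate}.
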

 We also review the definition of the $L^p$ norm of $g_1$ with respect to $g_0$,
 
 \begin{align}
     \|g_1\|_{L^p_{g_0}(M)}=\left(\int_M |g_1|_{g_0}^pdV_{g_0}\right)^{1/p},
 \end{align}
 where $dV_{g_0}$ is the volume form with respect to $g_0$.

 \subsection{Symmetric Families of Curves}
 
 We now give the definition of a symmetric family of curves, depicted in Figure \ref{fig:SymmetricFamilyCurves}, which foliates a region of full volume by curves connecting points $x,y \in M$.

\begin{defn}\label{SymFamilyofCurvesDef}
 Let $x,y \in M_0$ and $\alpha(t)$ be a length minimizing geodesic joining $x$ to $y$, with respect to $M_0$, of length $L$. By extending a distance $\tau \in (0,\varepsilon)$ in radial directions orthogonal to $\alpha'(t)$, which can be parameterized over $\Sp^{m-2}$, one obtains a tubular neighborhood $\alpha \subset \bar{U}_{\varepsilon} \subset M$ with coordinates $(\tau,t,\vec{s}) \in [0,\varepsilon]\times [0,L]\times\Sp^{m-2}$. For fixed $(\tau,\vec{s}) \in  [0,\varepsilon] \times \Sp^{m-2}$ we define a curve connecting $x$ and $y$ in coordinates by
 \begin{align}
 \gamma(\tau,t,\vec{s}) = (\tau L\sin\left(\frac{\pi}{L}t\right),t,\vec{s}).
 \end{align} 
 We define the \textbf{symmetric family of curves joining $x$ to $y$ of width $\varepsilon$} to be the set $U_{\varepsilon}$ foliated by the curves $\gamma(\tau,t,\vec{s})$ for $(\tau,t,\vec{s}) \in (0,\varepsilon)\times [0,L]\times\Sp^{m-2}$.

 Let $d\gamma^{-1}$ be the differential of the map $\gamma^{-1}:U_{\vare}\setminus\{x,y\}\rightarrow[0,\vare]\times(0,L)\times\Sp^{m-2}$ and  $[\gamma'(p)]^{\perp}\subset T_pM$ be the orthogonal subspace to $\gamma'(p)$. The \textbf{normal Jacobian} of $\gamma^{-1}$ at $x$ is the determinant of the restriction of $d\gamma^{-1}\big\vert_{x}$ to $[\gamma'(x)]^{\perp}$ denoted
 \begin{align}
    NJ\gamma^{-1}(x):= \det_{[\gamma'(x)]_{g_0}^{\perp}}\left(d\gamma^{-1}\big\vert_{x}\right).
 \end{align}
 \end{defn}

 We then review a Lemma which guarantees the existence of a normal cylindrical neighborhood around any geodesic with a uniform size which does not depend on the points $x,y \in M$ which the geodesic is connecting. 
 
 \begin{lem}\label{NorNeighRadiusEst}
 Let $M_0=(M,g_0)$ be a smooth Riemannian manifold defined on the closed manifold $M$. Let $K\in [0,\infty)$ be a bound on the absolute value of the sectional curvature of $M_0$ and $\injrad(M_0)$ be the injectivity radius. If  $x,y\in M_0$ and $\alpha(t)$ a length minimizing geodesic joining $x$ to $y$, parameterized by arc length, then there exists a $\bar{\varepsilon}(K,\injrad(M_0)) > 0$ so that for any $0 < \varepsilon < \bar{\varepsilon}$  a tubular neighborhood of width $\varepsilon$ exists, $\bar{U}_{\varepsilon}$.
 \end{lem}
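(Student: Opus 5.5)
The plan is to build $\bar U_\varepsilon$ as the image of the normal exponential map along $\alpha$, and to get a uniform $\bar\varepsilon$ from comparison estimates that depend only on $K$ and $\injrad(M_0)$. Since $\alpha$ is length minimizing and parameterized by arc length, its length satisfies $L=d_{g_0}(x,y)\le \diam(M_0)$. Let $E_1(t),\dots,E_{m-1}(t)$ be a parallel orthonormal frame of the normal bundle $[\alpha'(t)]^\perp$. For $\vec s\in\Sp^{m-2}$ put $\nu(t,\vec s)=\sum_i s_iE_i(t)$. Define
\begin{align}
\Phi(\tau,t,\vec s)=\exp_{\alpha(t)}\big(\tau\,\nu(t,\vec s)\big),\qquad (\tau,t,\vec s)\in[0,\varepsilon]\times[0,L]\times\Sp^{m-2}.
\end{align}
Then $\bar U_\varepsilon$ is the image of $\Phi$, and the coordinates $(\tau,t,\vec s)$ of Definition \ref{SymFamilyofCurvesDef} are read off through $\Phi^{-1}$. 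What has to be shown is that $\Phi$ is a diffeomorphism onto its image away from $\tau=0$, with the usual polar degeneracy along $\alpha$, for every $\varepsilon<\bar\varepsilon(K,\injrad(M_0))$, uniformly in $x,y$.

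\textbf{Step 1: $\Phi$ is a local diffeomorphism.} I would compute $d\Phi$ with Jacobi fields along the normal geodesics $\tau\mapsto\Phi(\tau,t,\vec s)$. Varying $t$ produces a Jacobi field $J$ with $J(0)=\alpha'(t)$ and $J'(0)=0$, because $E_i$ is parallel and $\alpha$ is a geodesic. Varying $\vec s$ produces Jacobi fields with $J(0)=0$ and $J'(0)$ tangent to the sphere in $[\alpha'(t)]^\perp$. Rauch comparison with $|\sec|\le K$ gives $|J(\tau)-P_\tau J(0)|\le C(K)\tau$ in the first case, where $P_\tau$ is parallel transport, and $|J(\tau)|\ge \sin(\sqrt K\tau)/\sqrt K$ in the second. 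So once $\tau<\bar\varepsilon_1(K)$, these fields together with $\partial_\tau\Phi$ stay linearly independent, and $\Phi$ is a local diffeomorphism for $\tau\in(0,\bar\varepsilon_1)$.

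\textbf{Step 2: $\Phi$ is injective.} This is the main obstacle. Suppose $\Phi(\tau_1,t_1,\vec s_1)=\Phi(\tau_2,t_2,\vec s_2)=q$ with $\tau_i<\varepsilon$. First take $|t_1-t_2|\ge 4\varepsilon$. By the triangle inequality $d(\alpha(t_1),\alpha(t_2))<2\varepsilon$, but minimality of $\alpha$ gives $d(\alpha(t_1),\alpha(t_2))=|t_1-t_2|\ge4\varepsilon$, a contradiction. This is the step where minimality is essential. Otherwise $|t_1-t_2|<4\varepsilon$, and both preimages lie in the ball of radius $5\varepsilon$ about $\alpha(t_1)$. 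Take $5\varepsilon<\injrad(M_0)$ and, in addition, $\varepsilon$ below the convexity radius, which is bounded below by $\min\{\injrad(M_0)/2,\pi/(2\sqrt K)\}$. Then everything happens inside a strongly convex normal coordinate chart. In that chart the metric is $C(K)\varepsilon^2$-close to Euclidean and the normal geodesics are nearly straight lines orthogonal to a nearly straight segment. Working at this local scale, a quantitative inverse function argument, or equivalently a first-variation argument, shows that $q$ has a unique closest point on $\alpha$: a second closest point would contradict Step 1 together with convexity. Two distinct preimages would each give a closest point of $\alpha$ to $q$, so this forces $(\tau_1,t_1,\vec s_1)=(\tau_2,t_2,\vec s_2)$.

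\textbf{Step 3: the endpoints.} Near $t=0$ and $t=L$ the tube extends slightly past $x$ and $y$. This does not matter, because the curves of Definition \ref{SymFamilyofCurvesDef} have $\tau L\sin(\pi t/L)\to0$ at the ends, so they only use the part of the tube over $[0,L]$. Finally I would set $\bar\varepsilon=\min\{\bar\varepsilon_1(K),\ \injrad(M_0)/5,\ \pi/(4\sqrt K)\}$, which depends only on $K$ and $\injrad(M_0)$. Every $\varepsilon<\bar\varepsilon$ then gives a tubular neighborhood $\bar U_\varepsilon$ with coordinates $(\tau,t,\vec s)$, independent of $x$ and $y$.
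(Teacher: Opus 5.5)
The paper gives no proof of this lemma (it is imported from \cite{Allen}), so your argument has to stand on its own. Step 1 and the first case of Step 2 are fine. In fact minimality of $\alpha$ shows directly that $|t_1-t_2|=d_{g_0}(\alpha(t_1),\alpha(t_2))\le \tau_1+\tau_2<2\varepsilon$ for any two preimages, so only the second case matters. That case is the real content of the lemma, and it is not established. A preimage $(\tau_i,t_i,\vec s_i)$ of $q$ does \emph{not} make $\alpha(t_i)$ a closest point of $\alpha$ to $q$. It only says that the minimizing geodesic from $\alpha(t_i)$ to $q$ leaves $\alpha$ orthogonally, i.e.\ that $t_i$ is a critical point of $f(t)=\tfrac12 d_{g_0}(q,\alpha(t))^2$. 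So uniqueness of the closest point says nothing about injectivity. Nor can Step 1 rule out a second preimage, since a local diffeomorphism may have many. Finally, the picture of normal geodesics as nearly straight lines in normal coordinates would need $C^1$ control of the metric in those coordinates, which you do not derive from $K$ and $\injrad(M_0)$.

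What is needed is that $f$ has at most one critical point between $t_1$ and $t_2$. For such $t$ one has $d_{g_0}(q,\alpha(t))\le \tfrac12(\tau_1+\tau_2+|t_1-t_2|)<2\varepsilon$. So suppose $2\varepsilon<\min\{\injrad(M_0),\pi/(2\sqrt K)\}$. Then $f$ is smooth on that interval, and
$f'(t_i)=-g_0\big(\exp^{-1}_{\alpha(t_i)}q,\alpha'(t_i)\big)=-\tau_i\, g_0(\nu(t_i,\vec s_i),\alpha'(t_i))=0$.
Since $\alpha$ is a geodesic, the Hessian comparison theorem for $\sec\le K$ gives
$f''(t)=\mathrm{Hess}\big(\tfrac12 d_q^2\big)(\alpha',\alpha')\ge \sqrt K d_q\cot(\sqrt K d_q)>0$,
where $d_q=d_{g_0}(q,\alpha(t))$ and the middle expression is read as $1$ when $K=0$. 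Thus $f'$ is strictly increasing, which forces $t_1=t_2$. Then $\tau_i<\injrad(M_0)$ gives $\tau_1\nu(t_1,\vec s_1)=\tau_2\nu(t_1,\vec s_2)$, hence equality of the parameters. This is exactly where the convexity you allude to has to enter, and it produces $\bar\varepsilon$ depending only on $K$ and $\injrad(M_0)$.
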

 
 Lastly, we state a lemma which gives estimates on the geometry of the symmetric family of curves in terms of geometric bounds on the background Riemannian metric $g_0$.
 
 \begin{lem}\label{MainToolsForProof}
 Let $M_0=(M,g_0)$ be a smooth Riemannian manifold defined on the smooth, connected, closed manifold $M$. Assume  $x,y\in M_0$ and $\alpha(t)$ a length minimizing geodesic joining $x$ to $y$, parameterized by arc length. then there exists an $\varepsilon >0$ so that a symmetric family of curves joining $x$ to $y$ of width $\varepsilon$ exists, $U_{\varepsilon}$. If $K \in [0,\infty)$, is a bound on the absolute value of the sectional curvature, $I_0=\injrad(M_0)$ the injectivity radius, and $D_0=\diam(M_0)$ the diameter of $M_0$ then for $\bar{\varepsilon}=\bar{\varepsilon}(K, \injrad(M_0))$ from Lemma \ref{NorNeighRadiusEst} we have that for $0<\varepsilon \le \frac{\bar{\varepsilon}}{\diam(M_0)} $ a symmetric family of curves joining $x$ to $y$ of width $\varepsilon$ exists, $U_{\varepsilon}$,  and with $\gamma'=\frac{d\gamma}{dt}$ we find
 \begin{align}
 |\gamma'(\tau,t,\vec{s})|_{g_0} \le C(K,I_0,D_0), \quad (\tau,t,\vec{s}) \in [0,\varepsilon]\times [0,L]\times \Sp^{m-2}.
\end{align}  
We find the normal jacobian of $\gamma^{-1}$ to be estimated by
\begin{align}\label{Curve_Family_Coarea_Formula}
NJ\gamma^{-1}&\le C(M,g_0) \left(L\sin\left(\frac{\pi}{L}t\right) \right)^{1-m}.
\end{align}

 \end{lem}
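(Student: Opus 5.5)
The plan is to work in Fermi coordinates along $\alpha$ and reduce all three claims (existence of $U_{\varepsilon}$, the bound on $|\gamma'|_{g_0}$, and the normal Jacobian estimate) to uniform comparison estimates for the $g_0$ metric in these coordinates. These estimates come from $K$, $I_0$ and $D_0$ via Jacobi field (Rauch) comparison. Concretely, let $\Phi(r,t,\vec{s})=\exp_{\alpha(t)}(r\,E(t,\vec{s}))$, where $E(t,\vec{s})$ is the unit normal vector field along $\alpha$ obtained by parallel transport of a fixed orthonormal frame of $[\alpha'(0)]^\perp$ and identified with $\Sp^{m-2}$. By Lemma \ref{NorNeighRadiusEst}, $\Phi$ is a diffeomorphism from $(0,\bar\varepsilon)\times(0,L)\times\Sp^{m-2}$ onto its image. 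Moreover, in the coordinates $(r,t,\vec{s})$ the metric satisfies
\begin{align}
C^{-1}\left(dr^2+dt^2+r^2 g_{\Sp^{m-2}}\right)\le g_0\le C\left(dr^2+dt^2+r^2 g_{\Sp^{m-2}}\right),\quad r<\bar\varepsilon,
\end{align}
with $C=C(K,I_0,D_0)$. The justification is that $\partial_r$ is a unit radial geodesic field, and $\partial_t$ and the angular fields are Jacobi fields along these radial geodesics with initial data controlled by $\alpha$. Their growth for $r<\bar\varepsilon$ is therefore bounded above and below by Rauch comparison with curvature $\pm K$, possibly after shrinking $\bar\varepsilon$ depending on $K$.

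\textbf{Existence and the bound on $\gamma'$.} The curve $\gamma(\tau,\cdot,\vec{s})$ has radial coordinate $r=\tau L\sin(\pi t/L)\le \varepsilon L\le \varepsilon D_0\le\bar\varepsilon$ whenever $\varepsilon\le \bar\varepsilon/D_0$. Hence every curve lies in the tubular neighborhood, and the curves for distinct $(\tau,\vec{s})$ are disjoint away from $x,y$, so $U_\varepsilon$ is well defined and foliated. Differentiating gives
\begin{align}
\gamma'=\pi\tau\cos\left(\tfrac{\pi}{L}t\right)\partial_r+\partial_t .
\end{align}
Since $|\partial_r|_{g_0}=1$, $|\partial_t|_{g_0}\le C$, and $\pi\tau\le\pi\bar\varepsilon/D_0$, we obtain $|\gamma'|_{g_0}\le C(K,I_0,D_0)$.

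\textbf{Normal Jacobian.} The inverse map is $\gamma^{-1}(r,t,\vec{s})=\bigl(r/(L\sin(\pi t/L)),\,t,\,\vec{s}\bigr)$. On the parameter side I use the polar measure $\tau^{m-2}d\tau\, dt\, d\vec{s}$, which is the measure the coarea formula will be applied with. The Euclidean-type model computation on the transverse slice $\{t=\text{const}\}$ runs as follows. The map $(r,\vec{s})\mapsto(\tau,\vec{s})$ scales $r$ by $(L\sin)^{-1}$. It sends the area element $r^{m-2}dr\,d\vec{s}$ to $\tau^{m-2}d\tau\,d\vec{s}$ multiplied by $(L\sin)^{-(m-1)}$, which gives exactly the factor $(L\sin(\pi t/L))^{1-m}$. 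Passing from the slice to $[\gamma']^\perp_{g_0}$ costs only a bounded factor, because the angle between $\gamma'$ and $\partial_t$ is uniformly bounded away from $\pi/2$ by the previous step. The metric comparison above contributes a further constant $C(M,g_0)$, and together these give \eqref{Curve_Family_Coarea_Formula}.

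\textbf{Main obstacle.} I expect the hardest step to be making the constants uniform near the endpoints $x,y$, where $L\sin(\pi t/L)\to0$ and the foliation degenerates, and also uniform over all pairs $x,y$, including short geodesics. The singular behaviour is captured exactly by the explicit factor $(L\sin)^{1-m}$, so what remains is to verify that the Fermi comparison constants depend only on $K$, $I_0$ and $D_0$ and not on $L$. I would first try to obtain this by applying the Rauch estimates along each radial geodesic separately, since their lengths are at most $\bar\varepsilon$ regardless of $L$.
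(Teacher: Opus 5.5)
The paper gives no proof of this lemma. It is quoted as background from \cite{Allen}, so there is no in-paper argument to compare yours against. Your route is correct and is the natural one, with two steps that need tightening (below).

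\begin{itemize}
\item The coordinates $(\tau,t,\vec{s})$ of Definition \ref{SymFamilyofCurvesDef} are exactly normal-exponential (Fermi) coordinates along $\alpha$.
\item The condition $\varepsilon\le\bar\varepsilon/D_0$ is what forces $r=\tau L\sin(\pi t/L)\le\varepsilon D_0\le\bar\varepsilon$.
\item Your formula $\gamma'=\pi\tau\cos(\pi t/L)\partial_r+\partial_t$ gives the speed bound.
\item The slice computation produces exactly $(L\sin(\pi t/L))^{1-m}$.
\end{itemize}

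You were right to read $\det_{[\gamma']^\perp}d\gamma^{-1}$ as the normal Jacobian of the leaf map $z\mapsto(\tau,\vec{s})$, and to use the polar measure $\tau^{m-2}d\tau\,d\vec{s}$. That is the measure of $\Gamma_\varepsilon$ in Lemma \ref{Coarea Formula to Potential Integral}. With $d\tau\,d\vec{s}$ instead, the slice Jacobian picks up an extra $\tau^{2-m}$ and the bound fails for $m\ge 3$.

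The endpoint obstacle you anticipate is not really there. The Fermi comparison is uniform on the whole tube, including near $x$ and $y$, and $L$ enters only through the explicit factor.

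\textbf{First step to tighten: the lower half of your metric comparison.} It does not follow from separate Rauch bounds on $|\partial_t|$ and on the angular fields. The vector $b\,\partial_t+c\cdot\partial_{\vec{s}}$ is the value of a single Jacobi field with mixed data $J(0)=b\,\alpha'(t)$ and $J'(0)\perp\alpha'(t)$. Lower bounds on the two pieces separately do not rule out near-cancellation.

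This lower bound is exactly what the normal Jacobian estimate uses. It gives the lower bound on the $g_0$-area of the slice, hence the upper bound on the slice Jacobian. It also gives the upper bound on $|dt|_{g_0}$.

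Either of these fixes works:
\begin{itemize}
\item Use compactness of $M$; the constant is allowed to be $C(M,g_0)$.
\item Integrate the Jacobi equation in a parallel frame along the radial geodesic. This gives $|P_r^{-1}J(r)-J(0)-rJ'(0)|\le Kr^2(|J(0)|+r|J'(0)|)$ for $r\le c/\sqrt{K}$, while $|J(0)+rJ'(0)|^2=b^2+r^2|c|^2$ is precisely the model norm. Shrinking $\bar\varepsilon$ to allow this is harmless, since Lemma \ref{NorNeighRadiusEst} only asserts existence of some $\bar\varepsilon(K,I_0)$.
\end{itemize}

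\textbf{Second step to tighten: the angle factor.} The factor converting the slice Jacobian into the normal Jacobian is $1/\cos$ of the angle between $\gamma'$ and the $g_0$-normal $\nabla t$ of the slice $\{t=\mathrm{const}\}$. It is not the angle between $\gamma'$ and $\partial_t$. The Gauss lemma gives $\partial_r\perp\partial_t$, but off $\alpha$ one has $g_0(\partial_t,\partial_{\vec{s}})\neq 0$, so $\partial_t$ is not normal to the slice. Since $dt(\gamma')=1$, this gives the identity $NJ\gamma^{-1}=|dt|_{g_0}\,|\gamma'|_{g_0}\,\det\bigl(d\gamma^{-1}|_{\{t=\mathrm{const}\}}\bigr)$. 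Both prefactors are bounded by the corrected comparison and your speed bound.
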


 \subsection{Sobolev Inequalities for Smooth Riemannian metrics}
  In this subsection we review some results from \cite{Allen-Bryden-Second} which do not change in the present context. The first lemma is an estimate involving an application of the coarea formula to symmetric families of curves.
  
\begin{lem}\label{Coarea Formula to Potential Integral}Let $U_{\vare}$ be a symmetric family of curves connecting two points of $M$, say $x$ and $y$. Let $\Gamma_{\vare}=\left([0,\vare]\times\mathbb{S}^{m-2},d\tau^2+\tau^2g_{\Sp^{m-2}}\right)$, which parameterizes the curves foliating $U_{\vare}$. Let $\mu_{\vare}$ denote the volume measure for $\Gamma_{\vare}$. Then, for any non-negative measurable function, say $f$, we have
\begin{equation}
    \int_{\Gamma_{\vare}\times[0,L]}fdtd\mu_{\vare}\le C(M,g_{0})\int_{U_{\vare}} f(z)\left(d_{g_0}(x,z)^{1-m}+d_{g_{0}}(y,z)^{1-m}\right)dV_{g_{0}}(z)
\end{equation}

\end{lem}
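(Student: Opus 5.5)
The plan is to change variables from the parameter space $\Gamma_{\vare}\times[0,L]$ to the region $U_{\vare}\subset M$ via the map $\Phi(\tau,\vec{s},t)=\gamma(\tau,t,\vec{s})$, and to control the Jacobian of this change of variables with the normal Jacobian estimate \eqref{Curve_Family_Coarea_Formula} of Lemma \ref{MainToolsForProof}. Since $\Phi$ is a diffeomorphism from $(0,\vare)\times(0,L)\times\Sp^{m-2}$ onto $U_{\vare}\setminus\alpha$ (up to measure zero), the classical change of variables formula applies to nonnegative measurable $f$. It gives
\begin{equation*}
\int_{\Gamma_{\vare}\times[0,L]} f\,dt\,d\mu_{\vare}=\int_{U_{\vare}} f(z)\,\frac{\tau^{m-2}}{J\Phi(\Phi^{-1}(z))}\,dV_{g_0}(z),
\end{equation*}
where $J\Phi$ is the Jacobian of $\Phi$ relative to the product measure $d\tau\,\tau^{m-2}d\vec{s}\,dt$ on the parameter side and $dV_{g_0}$ on the target side. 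Equivalently, I would use the coarea formula along the foliation: $U_{\vare}$ is fibred by the curves $t\mapsto\gamma(\tau,t,\vec s)$, and integrating $f$ along each curve with respect to $dt$ (rather than $g_0$-arclength) produces the factor $NJ\gamma^{-1}$ together with $|\gamma'|_{g_0}$. That is,
\begin{equation*}
\int_{\Gamma_\vare}\int_0^L f\,dt\,d\mu_\vare=\int_{U_\vare} f(z)\,NJ\gamma^{-1}(z)\,|\gamma'|_{g_0}^{-1}\cdot(\text{density of }\mu_\vare)\,dV_{g_0}.
\end{equation*}
The factor $\tau^{m-2}$ from $\mu_\vare$ is bounded by $\vare^{m-2}\le C$. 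One caveat: the factor $|\gamma'|_{g_0}^{-1}$ needs a lower bound $|\gamma'|_{g_0}\ge c>0$. I expect this follows, like the upper bound, from the explicit form of $\gamma$, since the $t$-coordinate advances at unit rate and the metric in tubular coordinates is comparable to the Euclidean one.

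Next I would bound the Jacobian factor using \eqref{Curve_Family_Coarea_Formula}. That estimate gives $C(M,g_0)\,(L\sin(\pi t/L))^{1-m}$ with $t=t(z)$ the curve parameter of $z$. The remaining step is to compare $L\sin(\pi t/L)$ to distances. For $t\in[0,L/2]$ we have $L\sin(\pi t/L)\ge 2t$, and for $t\in[L/2,L]$ we have $L\sin(\pi t/L)\ge 2(L-t)$. Moreover, $d_{g_0}(x,z)\le C\,t$, since $z=\gamma(\tau,t,\vec s)$ is joined to $x$ by the piece $\gamma|_{[0,t]}$, whose length is at most $C(K,I_0,D_0)\,t$ by the speed bound in Lemma \ref{MainToolsForProof}. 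Symmetrically, $d_{g_0}(y,z)\le C(L-t)$. Since $1-m<0$, this gives
\begin{equation*}
(L\sin(\pi t/L))^{1-m}\le C\bigl(d_{g_0}(x,z)^{1-m}+d_{g_0}(y,z)^{1-m}\bigr),
\end{equation*}
splitting according to whether $t\le L/2$ or $t\ge L/2$. Substituting this into the change of variables yields the claimed inequality.

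I expect the main obstacle to be the bookkeeping that matches the measure $dt\,d\mu_\vare$ to the normal Jacobian as defined. The normal Jacobian is taken with respect to $g_0$-orthogonal complements of $\gamma'$, and the coordinates $(\tau,\vec s)$ carry the cone metric. One must check that the coarea formula for the submersion $z\mapsto(\tau,\vec s)$ onto $\Gamma_\vare$ gives exactly $dV_{g_0}=NJ\gamma^{-1}(z)^{-1}\,ds_{g_0}\,d\mu_\vare$, and then convert $g_0$-arclength $ds_{g_0}=|\gamma'|_{g_0}dt$ using the two-sided speed bounds. I would first attempt this directly in the tubular coordinates of Lemma \ref{NorNeighRadiusEst}, where $g_0$ is uniformly comparable to $d\tau^2+\tau^2 g_{\Sp^{m-2}}+dt^2$ near $\alpha$. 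All constants there depend only on $K$, $I_0$ and $D_0$.
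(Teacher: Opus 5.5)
The paper does not prove this lemma; it is quoted unchanged from \cite{Allen-Bryden-Second}, so there is no in-text argument to compare with. Your route is the natural one and does prove the statement: apply the coarea formula along the foliation, use the bound \eqref{Curve_Family_Coarea_Formula} of Lemma \ref{MainToolsForProof}, then combine $L\sin(\pi t/L)\ge 2\min\{t,L-t\}$ with $d_{g_0}(x,z)\le Ct$ and $d_{g_0}(y,z)\le C(L-t)$ from the speed bound. Two pieces of bookkeeping need repair.

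First, no factor $\tau^{m-2}$ should be left over to be bounded by $\vare^{m-2}$. Since \eqref{Curve_Family_Coarea_Formula} is uniform in $\tau$, the normal Jacobian there must be taken with respect to the cone metric $d\tau^2+\tau^2g_{\Sp^{m-2}}$, i.e.\ against $\mu_\vare$ itself. The coarea formula then reads $\int_{\Gamma_\vare}\int_0^L F\,|\gamma'|_{g_0}\,dt\,d\mu_\vare=\int_{U_\vare}F\,NJ\gamma^{-1}\,dV_{g_0}$ with no extra density. This is the version in your last paragraph, but your first two displays carry an additional $\tau^{m-2}$. If you instead take the Jacobian relative to $d\tau\,d\vec s\,dt$, so that $\tau^{m-2}$ genuinely appears, the remaining factor has size $\tau^{2-m}(L\sin(\pi t/L))^{1-m}$. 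Discarding $\tau^{m-2}\le\vare^{m-2}$ would then leave an uncontrolled $\tau^{2-m}$: the $\tau^{m-2}$ has to cancel, not be estimated.

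Second, the lower bound $|\gamma'|_{g_0}\ge c>0$ is really needed, because the lemma integrates against $dt$ rather than $g_0$-arclength. Lemma \ref{MainToolsForProof} does not supply it. It is true: in Fermi coordinates $(r,t,\vec s)$ about $\alpha$, the generalized Gauss lemma gives $\partial_r\perp\partial_t$, and $|\partial_t|_{g_0}\ge\frac12$ once $r$ is small in terms of $K$. Hence $|\gamma'|_{g_0}\ge|\partial_t|_{g_0}\ge\frac12$.

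Cleaner still, you can skip the normal Jacobian and the speed lower bound altogether. Write $dV_{g_0}=\theta\,r^{m-2}\,dr\,dt\,d\vec s$ with $c\le\theta\le C$ on the tube. The substitution $r=\tau L\sin(\pi t/L)$ at fixed $t$ then gives $dt\,d\mu_\vare=\theta^{-1}\,(L\sin(\pi t/L))^{1-m}\,dV_{g_0}$ directly, and the rest of your argument goes through unchanged.
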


We now observe an estimate of lengths of curves in a symmetric family of curves.

\begin{lem}\label{uniform distance estimate symmetric family of curves}
    Let  $(M,g_0)$ be a  compact Riemannian manifold. Then, for any $\vare>0$ there exists a $\delta(M,g_0,\vare):=\delta=>0$ such that if $U_{\delta}$ is a symmetric family of curves about a minimal length path connecting any two points, say $x$ to $y$, then for any curve in the family $U_{\delta}$, say $c$, we have
    \begin{equation}
        l(c)\leq d_{g_{0}}(x,y)+2\vare.
    \end{equation}
\end{lem}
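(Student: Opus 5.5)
We need to prove Lemma \ref{uniform distance estimate symmetric family of curves}: for every $\vare>0$ there is a $\delta$, uniform over all pairs $x,y$, such that every curve $c$ in $U_\delta$ satisfies $l(c)\le d_{g_0}(x,y)+2\vare$. The plan is to compare each curve $\gamma(\tau,\cdot,\vec{s})$ with the core geodesic $\alpha$ using the tubular coordinates of Lemma \ref{NorNeighRadiusEst}, and to show that the speed $|\gamma'|_{g_0}$ exceeds $|\alpha'|_{g_0}=1$ by at most $O(\tau)$, uniformly in $x,y$. Since $\alpha$ is length minimizing, $L=d_{g_0}(x,y)\le D_0$. So it suffices to establish
\begin{equation*}
|\gamma'(\tau,t,\vec{s})|_{g_0}\le 1+C(K,I_0,D_0)\,\tau \quad\text{for } \tau\in[0,\delta],
\end{equation*}
because integrating over $t\in[0,L]$ then gives $l(c)\le L+C D_0\delta$, and choosing $\delta\le 2\vare/(CD_0)$ finishes.

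To get the pointwise speed bound, I would write the point of $\gamma$ at parameter $t$ as $\exp_{\alpha(t)}(r(t)\,\nu(t,\vec{s}))$. Here $r(t)=\tau L\sin(\pi t/L)$ and $\nu(t,\vec{s})$ is the unit normal obtained by parallel transport of $\vec{s}$ along $\alpha$. Differentiating in $t$ splits $\gamma'$ into two pieces:
\begin{itemize}
\item a radial piece $r'(t)\,\partial_r$, of size $|r'|=\tau\pi|\cos(\pi t/L)|\le\pi\tau$;
\item the variation through the geodesic family $s\mapsto \exp_{\alpha(t)}(r\nu)$ as $t$ moves, which is a Jacobi field $J$ along the radial geodesic of length $r(t)\le \tau D_0$, with initial data $J(0)=\alpha'(t)$ and $J'(0)=\nabla_t\nu=0$.
\end{itemize}
Rauch comparison under $|\sec|\le K$ gives $|J(r)|\le\cosh(\sqrt K r)\le 1+C(K,D_0)\tau$. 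Combining the two pieces by the triangle inequality yields $|\gamma'|\le 1+C\tau$, with constants depending only on $K$ and $D_0$. This holds as long as $\tau D_0<\bar\vare$, so that the radial geodesics stay inside the injectivity domain of Lemma \ref{NorNeighRadiusEst}. We therefore also take $\delta\le \bar\vare/D_0$, consistent with Lemma \ref{MainToolsForProof}.

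The main obstacle is the uniformity of the Jacobi field estimate in $x,y$. Each curve lives in its own coordinate tube, and we need the error term to be linear in $\tau$ with a constant controlled only by $K$, $I_0$ and $D_0$. I would handle this through the Rauch/Jacobi comparison above, which only uses the curvature bound and the fact that $r\le\tau D_0$ is small. If a cruder argument is preferred, an alternative is compactness: the map $(x,v,\tau,t,\vec{s})\mapsto \gamma'$ is continuous on the compact set of unit-speed minimizing geodesic data with $\tau\in[0,\bar\vare/D_0]$, and equals $\alpha'$ at $\tau=0$. Uniform continuity then gives $|\gamma'|\le 1+2\vare/D_0$ for $\tau<\delta$, which yields the same conclusion.
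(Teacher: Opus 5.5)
The paper does not prove this lemma. It is quoted from \cite{Allen-Bryden-Second} in the background section as a fact about the smooth metric $g_0$ alone, which is why it needs no change in the rough setting. So there is no in-paper argument to match, and your proposal has to be judged as a self-contained proof. As such it is correct.

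You isolate the real issue: the uniform bound $|\gamma'|_{g_0}\le C(K,I_0,D_0)$ of Lemma \ref{MainToolsForProof} is not enough, and one needs $|\gamma'|_{g_0}\le 1+C\tau$ uniformly in $x,y$. Writing $\gamma(t)=F(t,r(t))$ with $F(t,r)=\exp_{\alpha(t)}(r\nu(t,\vec{s}))$ gives $\gamma'=J(r(t))+r'(t)\,\partial_rF$, where $J(0)=\alpha'(t)$ and $J'(0)=\nabla_t\nu=0$. This is the right decomposition, and it yields an explicit $\delta$ linear in $\vare$. Your compactness fallback also works: rescale $t=sL$ and let $L$ range over $[0,D_0]$, so minimality plays no role. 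It gives only a qualitative $\delta$, but avoids curvature comparison entirely.

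Two small points are worth making explicit.

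First, the parallel-transport choice of the $\Sp^{m-2}$-parametrization is not just a convenience. If the normal frame rotated at rate $N$ in $t$, then $J'(0)=\nabla_t\nu$ would have size $N$. The curves would spiral, with lengths of order $\tau L^2 N$, so no uniform $\delta$ could exist. The lemma, and Lemma \ref{MainToolsForProof}, implicitly presuppose Fermi coordinates, as you assume.

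Second, the bound $|J(r)|\le\cosh(\sqrt{K}r)$ with $J(0)\neq0$, $J'(0)=0$ is the focal (submanifold) version of Rauch comparison, not the classical one with $J(0)=0$. It holds before the first focal point, in particular for $r<\pi/(2\sqrt{K})$. Alternatively, $|J''|=|R(J,\partial_rF)\partial_rF|\le K|J|$ together with Gronwall gives $|J(r)|\le e^{\sqrt{K}r}$ with no restriction, which is enough.

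As an aside, the Gauss lemma gives $J\perp\partial_rF$, so in fact $|\gamma'|^2=|J|^2+(r')^2$ and the excess speed is $O(\tau^2)$.
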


Next a result relating integratbility of potentials.

\begin{prop}\label{PotentialOperatorEstimate}
    Consider a closed Riemannian manifold $(M,g_0)$ and let $f$ be an element of $L^{p}(dVol_{g_{0}})$. Then, the function
    \begin{equation}
        \left(V_{g_{0}}f\right)(x):=\int_{M}f(z)d_{g_{0}}(x,z)^{1-m}dV_{g_{0}}(z),
    \end{equation}
    satisfies the bound
    \begin{equation}
        \|V_{g_{0}}f\|_{L^q}\leq C(M,g_{0},q)\|f\|_{L^{p}}
    \end{equation}
    for 
    \begin{equation}
        q<\frac{mp}{m-p}.
    \end{equation}
\end{prop}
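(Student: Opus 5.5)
The plan is to treat $V_{g_0}$ as an integral operator with kernel $K(x,z)=d_{g_0}(x,z)^{1-m}$. Because the inequality $q<\frac{mp}{m-p}$ is strict, we do not need the full Hardy–Littlewood–Sobolev machinery. It should be enough to show that $K(x,\cdot)$ lies in $L^s$ uniformly in $x$ for every $s<\frac{m}{m-1}$, and then apply a Young/Schur type inequality for symmetric kernels on the finite measure space $(M,dV_{g_0})$.

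\textbf{Step 1: uniform volume growth.} Since $M$ is closed and $g_0$ is smooth, we have $\vol_{g_0}(B(x,r))\le C(M,g_0)\, r^m$ for all $x\in M$ and $r>0$. For $r\le \injrad(M_0)$ this follows from comparison with a curvature bound, and for larger $r$ it holds trivially since $\vol_{g_0}(M)<\infty$.

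\textbf{Step 2: uniform $L^s$ bound on the kernel.} By the layer-cake formula,
\begin{align*}
\int_M d_{g_0}(x,z)^{-(m-1)s}\,dV_{g_0}(z)
&=\int_0^\infty \vol_{g_0}\left(\{z: d_{g_0}(x,z)<t^{-1/((m-1)s)}\}\right)dt\\
&\le \vol_{g_0}(M)+C\int_1^\infty t^{-\frac{m}{(m-1)s}}\,dt.
\end{align*}
The last integral is finite exactly when $s<\frac{m}{m-1}$. Hence $\sup_x\|K(x,\cdot)\|_{L^s}\le C(M,g_0,s)$, and by the symmetry $K(x,z)=K(z,x)$ the same bound holds in the other variable.

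\textbf{Step 3: generalized Young inequality.} For $1\le s,p,q\le\infty$ with $1+\frac1q=\frac1p+\frac1s$, write
\begin{align*}
K|f| = \left(K^s|f|^p\right)^{1/q}\,\left(K^s\right)^{\frac1p-\frac1q}\,\left(|f|^p\right)^{\frac1s-\frac1q}
\end{align*}
and apply Hölder's inequality with the three exponents $q$, $\frac{ps}{s-p\,(\ldots)}$-type conjugates, exactly as in the Euclidean proof. Integrating in $x$ and using Fubini together with the uniform bounds from Step 2 in both variables gives
\begin{align*}
\|V_{g_0}f\|_{L^q}\le \Big(\sup_x\|K(x,\cdot)\|_{L^s}\Big)\,\|f\|_{L^p}.
\end{align*}

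\textbf{Step 4: choice of exponents.} Given $q<\frac{mp}{m-p}$ with $p<m$, first reduce to the case $q\ge p$. This is allowed because $M$ has finite volume, so $\|\cdot\|_{L^q}\le C\|\cdot\|_{L^{q'}}$ for $q\le q'$. Now define $s$ by $\frac1s=1+\frac1q-\frac1p\in(0,1]$. The hypothesis $\frac1q>\frac1p-\frac1m$ is equivalent to $\frac1s>\frac{m-1}{m}$, that is, $s<\frac{m}{m-1}$, so Step 2 applies. If $p\ge m$, every finite $q$ is reachable in the same way.

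The only points requiring care are the uniform volume estimate in Step 1, where compactness and smoothness of $g_0$ are used, and the bookkeeping of Hölder exponents in Step 3. Neither should be a serious obstacle.
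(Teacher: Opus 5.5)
The paper does not prove this proposition; it is quoted from \cite{Allen-Bryden-Second} and used as a black box, so there is no in-paper argument to compare with. Your route is the standard one, namely the Riemannian version of the classical Euclidean potential estimate (Gilbarg--Trudinger, Lemma 7.12). Steps 1, 2 and 4 are correct as written:
uniform volume growth, the layer-cake bound $\sup_x\|d_{g_0}(x,\cdot)^{1-m}\|_{L^s}\le C(M,g_0,s)$ for $s<\frac{m}{m-1}$, the reduction to $q\ge p$ using $\vol_{g_0}(M)<\infty$, and the equivalence $q<\frac{mp}{m-p}\iff s<\frac{m}{m-1}$ when $\frac1s=1+\frac1q-\frac1p$.

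The one thing that is actually wrong is the displayed factorization in Step 3: the exponents on $K^s$ and $|f|^p$ are interchanged. As written, the total power of $K$ on the right is $\frac sq+s\left(\frac1p-\frac1q\right)=\frac sp$ and that of $|f|$ is $\frac ps$. So it is not an identity unless $s=p$. The correct splitting is
\begin{align*}
K|f|=\left(K^s|f|^p\right)^{\frac1q}\left(K^s\right)^{\frac1s-\frac1q}\left(|f|^p\right)^{\frac1p-\frac1q}.
\end{align*}
Since $\frac1s-\frac1q=1-\frac1p$ and $\frac1p-\frac1q=1-\frac1s$, H\"older's inequality in $z$ applies with exponents $q$, $p'$, $s'$, which satisfy $\frac1q+\frac1{p'}+\frac1{s'}=1$. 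With $A:=\sup_x\|K(x,\cdot)\|_{L^s}$ this gives
\begin{align*}
V_{g_0}|f|(x)\le\left(\int_M K(x,z)^s|f(z)|^p\,dV_{g_0}(z)\right)^{\frac1q}A^{\frac{s}{p'}}\|f\|_{L^p}^{\frac{p}{s'}}.
\end{align*}
Now raise to the power $q$, integrate in $x$, and use Fubini together with the same $L^s$ bound in the first variable. This yields $\|V_{g_0}f\|_{L^q}\le A\|f\|_{L^p}$, because $\frac sq+\frac s{p'}=1$ and $\frac pq+\frac p{s'}=1$.

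Alternatively, you can skip the bookkeeping and cite the generalized Young inequality for integral operators on $\sigma$-finite measure spaces (e.g.\ in Folland's \emph{Real Analysis}). With that correction your proof is complete.
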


\begin{rmrk}\label{remark-TrivialTransformSubstitution}
     Notice by plugging $f=1$ into the right hand side of Lemma \ref{Coarea Formula to Potential Integral} one obtains
    \begin{align}
        \int_{M}\frac{dV_{g_0}(z)}{d_{g_{0}}(x,z)^{m-1}}+\int_{M}\frac{dV_{g_0}(z)}{d_{g_{0}}(y,z)^{m-1}}= k(x)+k(y)
    \end{align}
    where $|k|_{L^{\infty}(M,g_0)} \le C(M,g_0)$ since in polar coordinate around $x$ or $y$ we see that $r^{1-m}$ is integrable and bounded by a constant $C(M,g_0)$, and where the integral away from the polar coordinates is bounded by $C\vol(M,g_0)$ .
\end{rmrk}

We end with a proposition that helps us estimate the singular set in Theorem \ref{MainTheoremConv} which will be used in Section \ref{sec-Poincare}.

\begin{prop}\label{Lp Hausdorff dimension control}
    Let $\delta>0$, and let $f_{j}$ be a sequence of non-negative functions tending to zero in $L^{p}(g_{0})$.
    If $f_{j}$ is such that
    \begin{equation}
        \sum^{\infty}_{j=1}\|f_{j}\|^p_{L^{p}(dV_{g_{0}})}<\infty,
    \end{equation}
    and $B(\delta)$ is defined by 
    \begin{equation}
        B(\delta)=\left\lbrace x:\limsup_{j\rightarrow\infty}\int_M\frac{f_{j}(y)}{d_{g_{0}}(x,y)^{m-1}}dV_{g_{0}}(y)\ge\delta\right\rbrace.
    \end{equation}
    then we have
    \begin{equation}
        \mathrm{dim}_{H_{g_{0}}}\left(B(\delta)\right)\le m-p.
    \end{equation}
\end{prop}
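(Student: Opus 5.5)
The plan is to show that $\mathcal{H}^s_{g_0}(B(\delta))=0$ for every $s>m-p$, which gives $\mathrm{dim}_{H_{g_0}}(B(\delta))\le m-p$. I would use a dyadic decomposition of the potential together with a Vitali covering argument, and exploit the summability hypothesis in a Borel--Cantelli fashion. Fix $s=m-p+p\eta$ with $\eta>0$ small, and for each $j$ set
\begin{align}
E_j=\left\{x: \int_M \frac{f_j(y)}{d_{g_0}(x,y)^{m-1}}dV_{g_0}(y)\ge \frac{\delta}{2}\right\}.
\end{align}
Every $x\in B(\delta)$ lies in infinitely many $E_j$. Hence $B(\delta)\subset \bigcup_{j\ge J}E_j$ for every $J$, and it suffices to bound $\mathcal{H}^s_\rho(E_j)$ by a quantity that is summable in $j$.

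\textbf{Step 1 (pointwise potential bound implies mass concentration).} Since $M$ is compact, $d_{g_0}\le D_0$. Splitting $M$ into the dyadic annuli $B(x,2^{-k})\setminus B(x,2^{-k-1})$, $k\ge k_0(D_0)$, gives
\begin{align}
\int_M \frac{f(y)}{d_{g_0}(x,y)^{m-1}}dV_{g_0}(y)\le C\sum_{k\ge k_0}2^{k(m-1)}\int_{B(x,2^{-k})}f\,dV_{g_0}.
\end{align}
If every term satisfied $2^{k(m-1)}\int_{B(x,2^{-k})}f< c\,\delta\,2^{-k\eta}$, the sum would be less than $\delta/2$ for a suitable $c=c(\eta,M,g_0)$. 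So for $x\in E_j$ there is a radius $r_x=2^{-k}$ with $\int_{B(x,r_x)}f_j\ge c\delta r_x^{m-1+\eta}$. Hölder's inequality on $B(x,r_x)$, combined with the volume bound $\vol_{g_0}(B(x,r))\le Cr^m$ valid uniformly on the compact manifold, then gives
\begin{align}
\|f_j\|^p_{L^p(B(x,r_x))}\ge c\,\delta^p\, r_x^{p(m-1+\eta)-m(p-1)}=c\,\delta^p\,r_x^{s}.
\end{align}

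\textbf{Step 2 (covering).} Step 1 gives $r_x^s\le C\delta^{-p}\|f_j\|^p_{L^p(M)}$. Because $\|f_j\|_{L^p}\to0$, all the radii $r_x$ with $x\in E_j$ are bounded by some $\rho_j\to0$. Apply the Vitali $5r$-covering lemma to $\{B(x,r_x)\}_{x\in E_j}$ to get a disjoint subfamily $B(x_i,r_i)$ such that the balls $B(x_i,5r_i)$ cover $E_j$. Summing the Step 1 inequality over the disjoint balls yields
\begin{align}
\mathcal{H}^s_{10\rho_j}(E_j)\le \sum_i (10 r_i)^s\le C\delta^{-p}\sum_i\|f_j\|^p_{L^p(B(x_i,r_i))}\le C\delta^{-p}\|f_j\|^p_{L^p(M)}.
\end{align}

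\textbf{Step 3 (conclusion).} Given $\rho>0$, choose $J$ so large that $10\rho_j<\rho$ for all $j\ge J$. Then
\begin{align}
\mathcal{H}^s_\rho(B(\delta))\le C\delta^{-p}\sum_{j\ge J}\|f_j\|^p_{L^p}.
\end{align}
This tail tends to $0$ as $J\to\infty$ by the summability hypothesis. Hence $\mathcal{H}^s(B(\delta))=0$ for every $s>m-p$.

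I expect the main obstacle to be Step 1. There one has to set up the dyadic decomposition carefully on a manifold rather than in $\mathbb{R}^m$, with uniform volume comparison for small balls (using the curvature and injectivity radius bounds of $g_0$) and only finitely many coarse scales up to $D_0$. One also has to choose the loss factor $2^{-k\eta}$ so that the geometric series converges. This factor is exactly why the argument only gives $\mathcal{H}^s=0$ for $s>m-p$ rather than for $s=m-p$.
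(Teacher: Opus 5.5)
Your proof is correct. The paper gives no proof of this proposition to compare against: it is imported from \cite{Allen-Bryden-Second} in the background section and stated without proof. Your argument is complete and self-contained. The dyadic splitting turns $V_{g_0}f_j(x)\ge\delta/2$ into the density bound $\|f_j\|^p_{L^p(B(x,r_x))}\ge c\,\delta^p r_x^{s}$ at some dyadic scale. The Vitali lemma then gives $\mathcal{H}^s_{10\rho_j}(E_j)\le C\delta^{-p}\|f_j\|^p_{L^p}$ with $\rho_j\to0$. Finally, the summable tail does the Borel--Cantelli work on $B(\delta)\subset\bigcap_J\bigcup_{j\ge J}E_j$.

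A common alternative is to argue through $(1,p)$-capacity: $\{V_{g_0}f\ge\lambda\}$ has capacity at most $\lambda^{-p}\|f\|^p_{L^p}$, capacity is countably subadditive, and capacity-null sets are $\mathcal{H}^s$-null for $s>m-p$. That route requires setting up the potential theory for the kernel $d_{g_0}^{1-m}$ on $M$. Your covering argument proves the needed special case directly. It uses only the uniform bound $\vol_{g_0}(B(x,r))\le Cr^m$ for $r$ up to a multiple of the diameter, which follows from compactness (e.g.\ Bishop--Gromov) with no injectivity-radius input.

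A few points are worth making explicit:
\begin{itemize}
\item The H\"older step needs $p\ge 1$. This is the range in which the paper applies the proposition, namely exponent $2p\ge1$ for $|g_k-g_0|_{g_0}^{1/2}$.
\item Step 2 uses $s>0$ to pass from $r_x^s\le C\delta^{-p}\|f_j\|^p_{L^p}$ to $r_x\le\rho_j\to0$. This is automatic for $p\le m$.
\item For $p>m$ the conclusion can only mean $B(\delta)=\emptyset$. This holds because $d_{g_0}(x,\cdot)^{1-m}\in L^{p/(p-1)}$ uniformly in $x$, so $V_{g_0}f_j\to0$ uniformly.
\item The disjoint Vitali subfamily is countable, since each ball has positive $g_0$-volume.
\item The hypothesis $\|f_j\|_{L^p}\to0$ is already implied by the summability assumption.
\end{itemize}
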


\subsection{Rough Riemannian Metrics}

We begin by defining rough Riemannian metrics and recalling some properties established in \cite{BandaraHassan}. 

\begin{defn}
 We say that $g$ is a rough Riemannian metric if it is a measurable symmetric $2-$tensor on $M$ that satisfies the condition: for every $x \in M$, there exists a chart $(U,\varphi)$ and a constant $C_U\ge 1$ so that
 \begin{align}
     C_U^{-1} |v|_{\varphi^*\delta(y)} \le |v|_{g(y)} \le C_U|v|_{\varphi^*\delta(y)},
 \end{align}
for $v \in T_yU$ and for almost every $y \in U$.
\end{defn}

Notice, even though a rough Riemannian metric is not smooth, for any smooth function $w$ we can still define the gradient of $w$ with respect to $h$ by 
\begin{align}
  \nabla ^h w = h^{ij} \frac{\partial w}{\partial x^i}\frac{\partial}{\partial x^j}.   
\end{align}

Although the usual definition of distance function for Riemannian manifolds does not work for rough Riemannian manifolds since curves are measure zero and the metric is not required to be defined on sets of measure zero. Despite this fact, one can give the following definition of distance function.

\begin{defn}
    Let $g$ be a rough Riemannian metric on the smooth manifold $M$ and define the distance function
    \begin{align}
        d_g(x,y)=\sup\{w(y)-w(x): w \in C^{0,1}(M), |\nabla^g w|_g \le  1 \, a.e.\}.
    \end{align}
\end{defn}
It was observed in \cite{BandaraHassan} that this distance function induces a length space structure endowed with a Radon measure. Many more important properties of rough Riemannian metrics which are not needed here are studied in \cite{BandaraRough, BandaraBryan, BNR, BandaraHassan} which the interested reader may want to investigate. We finish by recalling a lemma which will be used frequently in the estimates given in section \ref{sec-Poincare}.

\begin{lem}\label{lem-RRMPreSetup}
    Let $M$ be a manifold and $h$ a Rough Riemannian metric. If we let $\eta:I\rightarrow M$ be a curve and $w \in C^{0,1}(M)$ and $|\nabla^h w|_h \le 1$ a.e.  with respect to $dV_h$ then $\frac{d}{dt} \left(w\circ \eta \right)\in L^{\infty}(M,g_0)$ and
    \begin{align}
        \left|\frac{d}{dt} \left(w\circ \eta \right)\right|  \le |\eta'|_h.
    \end{align}
\end{lem}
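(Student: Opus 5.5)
The plan has two parts: a soft regularity statement, then the pointwise inequality obtained by averaging over nearby translated curves, since $\eta$ itself may lie in the null set where $h$ is undefined. Throughout I take $\eta$ to be Lipschitz (piecewise smooth in practice) with respect to $g_0$.

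\textbf{Regularity of $w\circ\eta$.} By compactness, cover $\eta(I)$ by finitely many charts $(U,\varphi)$ from the definition of a rough Riemannian metric, each with constant $C_U$. On such a chart, $|\nabla^h w|_h\le 1$ a.e. means $|dw|_{h^*}\le 1$ a.e., where $h^*$ is the dual metric. The bound $C_U^{-1}|v|_{\varphi^*\delta}\le |v|_h$ then gives $|dw|_{\varphi^*\delta}\le C_U$ a.e. on $U$. Since $w\in C^{0,1}(M)$, $w$ is locally $C_U$-Lipschitz in Euclidean coordinates, so $w\circ\eta$ is Lipschitz. By Rademacher, $\frac{d}{dt}(w\circ\eta)$ exists a.e. and is bounded by $C_U\,|\eta'|_{\varphi^*\delta}$, which is in turn bounded by a multiple of $|\eta'|_{g_0}$. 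This gives the $L^\infty$ statement.

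\textbf{Pointwise inequality away from null sets.} Because $h$ is only defined a.e., the right-hand side $|\eta'|_h$ must be read with $h$ along $\eta$ as the essential upper limit of $h$ at $\eta(t)$. This is the only reading under which the claim makes sense for an arbitrary curve, and it agrees with the pointwise value wherever $h$ is continuous. The key duality step is that at a.e.\ $y\in U$, and for every $v\in T_yM$, $|dw_y(v)|\le |dw_y|_{h^*}|v|_{h(y)}\le |v|_{h(y)}$. Now fix a subinterval $J$ with $\eta(J)\subset U$. In coordinates, consider the translated curves $\eta_z(t)=\eta(t)+z$ for $z$ in a small ball $B_r\subset\mathbb{R}^m$, where $\eta_z'=\eta'$. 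The map $(t,z)\mapsto \eta(t)+z$ pushes $dt\,dz$ to a measure absolutely continuous with respect to Lebesgue measure, because for each fixed $t$ the $z$-slice is just a translate. So by Fubini, for a.e.\ $z$ the curve $\eta_z$ meets the bad null set (where $w$ is not differentiable or the inequality fails) only at a $dt$-null set of times. For those $z$, the chain rule holds a.e.\ along $\eta_z$, and
\begin{align}
|w(\eta_z(b))-w(\eta_z(a))|\le \int_a^b |\eta'(t)|_{h(\eta(t)+z)}\,dt .
\end{align}

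\textbf{Passing to the limit, and the main obstacle.} Next average this inequality over $z\in B_r$ and let $r\to 0$. The left side converges to $|w(\eta(b))-w(\eta(a))|$ by continuity of $w$. On the right, the integrand is uniformly bounded by $C_U|\eta'|_{\varphi^*\delta}$, so reverse Fatou applies. It gives a limit superior bounded by $\int_a^b \operatorname{ess\,limsup}_{y\to\eta(t)}|\eta'(t)|_{h(y)}\,dt$, which is $\int_a^b|\eta'|_h\,dt$ in the sense fixed above. Dividing by $b-a$ and letting $b\to a$ at a Lebesgue point of the right side, which is also a differentiability point of $w\circ\eta$, yields $\left|\frac{d}{dt}(w\circ\eta)\right|\le|\eta'|_h$ a.e. in $t$.

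I expect the main obstacle to be exactly this interpretation and limiting step. Without upper semicontinuity of $h$ (or of the essential upper envelope), the averaged right-hand side need not converge to the value of $h$ on the curve itself. I would state the convention explicitly and check the reverse Fatou step carefully. In the application to the symmetric family of curves $\gamma(\tau,t,\vec{s})$, this issue disappears for a.e.\ curve, since that family foliates an open set.
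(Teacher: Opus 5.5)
Your argument is correct under the convention you fix, and it follows a genuinely different route from the paper's. The paper's proof is the one-line pointwise computation $\frac{d}{dt}(w\circ\eta)=h(\nabla^h w,\eta')$, followed by Cauchy--Schwarz and $|\nabla^h w|_h\le 1$. This tacitly assumes that for a.e.\ $t$, $w$ is differentiable at $\eta(t)$ and the gradient bound holds there, which is not automatic since $\eta(I)$ is a null set.

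Your caution is justified. On flat $\Tor^2$, let $h=\epsilon^2 g_0$ on the circle $\{x_2=0\}$ with $0<\epsilon<1$, and $h=g_0$ elsewhere, so $h=g_0$ a.e. Take $w=d_{g_0}(\cdot,0)$ and $\eta(t)=(t,0)$ for $0<t<1/2$. Then $(w\circ\eta)'=1>\epsilon=|\eta'|_{h(\eta)}$, so the literal statement depends on the representative.

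Your translation averaging makes the lemma valid for every Lipschitz curve. The price is that $h$ along $\eta$ is replaced by its essential upper envelope. This can be much larger than $h$: if $h=K^2g_0$ on a dense open set of small measure and $h=g_0$ elsewhere, the envelope is $K^2g_0$ everywhere. That version is not controlled by $\|h\|_{L^{p/2}}$. So it cannot be fed into Lemmas \ref{lem: distance potential estimate} and \ref{lem: distance potential estimate with error} or the proof of Theorem \ref{MainThmEst}. Those arguments integrate $|\eta'|_h$ over the symmetric family and convert it, via Lemma \ref{Coarea Formula to Potential Integral}, into integrals of $|h|_{g_0}^{1/2}$ (or $|h-g_0|_{g_0}^{1/2}$) itself.

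What those proofs need is the statement for $\mu$-a.e.\ curve of the family. That is exactly your ``for a.e.\ $z$'' Fubini step, with $(\tau,\vec{s})$ in place of $z$. Since $(\tau,t,\vec{s})\mapsto\gamma(\tau,t,\vec{s})$ is a diffeomorphism onto its image for $\tau>0$, $0<t<L$, null sets of $M$ pull back to null sets. You then get $|\frac{d}{dt}(w\circ\eta)|\le|\eta'|_h$ with the genuine $h$, for a.e.\ $t$ along a.e.\ curve, with no averaging or reverse Fatou. So the remark in your last sentence is the form of the lemma the paper actually uses, and it deserves to be your main statement.

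Three minor points. First, $|dw|_{\varphi^*\delta}\le C_U$ comes from the upper bound $|v|_h\le C_U|v|_{\varphi^*\delta}$, not the lower one. Second, the limit step you flag is harmless: $\frac{1}{|B_r|}\int_{B_r}|\eta'(t)|_{h(\eta(t)+z)}\,dz\le\operatorname{ess\,sup}_{y\in B_r(\eta(t))}|\eta'(t)|_{h(y)}$ trivially. So the $\limsup$ bound holds at every $t$, and reverse Fatou needs no semicontinuity. Third, your argument actually yields the smaller bound $\limsup_{r\to 0}\frac{1}{|B_r|}\int_{B_r(\eta(t))}|\eta'(t)|_{h(y)}\,dy$, so the essential upper limit is not ``the only reading'' that works.
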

\begin{proof}
     If we let $\eta:I\rightarrow M$ be a curve then $\frac{d}{dt} \left(w\circ \eta \right)$ is in $L^{\infty}(M,g_0)$ since $w \in C^{0,1}(M,g_0)$. Now we can do a similar calculation as Proposition 4.3 of \cite{BandaraRough} to find that
       \begin{align}
          \left|\frac{d}{dt} \left(w\circ \eta \right)\right| &= |h(\nabla^h w, \eta')| \le |\nabla^h w|_h |\eta'|_h \le |\eta'|_h ,
       \end{align}
       where we use that $|\nabla^h w|_h \le 1$ a.e. in the last step.
\end{proof}

\section{Examples}\label{sec-Examples}
In this section, we present examples of rough Riemannian metrics to demonstrate that the hypotheses of Theorem \ref{MainTheoremConv} are necessary.  These examples are adaptation of examples which appear in previous works on Riemannian metrics.  In the setting of Riemannian metrics, an interpolating function $h_j$ is used to guarantee that the metrics are smooth.  In the Rough Riemannian setting, the interpolating function is removed, resulting in points of non-differentiability.

We begin with an adaptation of \cite[Example 3.1]{Allen-Bryden-Second} which shows that the dimension of the singular set in Theorem \ref{MainTheoremConv} is sharp for $p> 1$.

\begin{ex}\label{Ex: Dim of Singular Set is Sharp } 
Let $(\theta_1,...,\theta_m)$ be coordinates on $\Tor^m$, $n \in \N$, and define
\begin{align}
    \Sigma^n=\{(\theta_1,...,\theta_m) \in \Tor^m: \theta_{n+1}=\theta_{n+2}=...=\theta_m=0\}.
\end{align}
Let $r$ denote the distance to $\Sigma^{m-p}$ with respect to the metric $d\theta_1^2+\dots+ d\theta_m^2$ and consider the sequence of functions on $\Tor^m$, $p \in \N$, $p \not = 1$ given as follows:
\begin{equation}
f_j(r)=
\begin{cases}
\frac{j^{2}}{1+2\ln(j)} &\text{ if } r \in [0,1/j^{2}]
\\ \frac{1}{r(1-\ln(r))} &\text{ if } r \in (1/j^{2},1/j]
\\ 1 & \text{ otherwise } 
\end{cases}
\end{equation}
where $\eta > 1$. Then $M_j = (\Tor^m, f_j^2 g_{\Tor^m})$ converges to $\Tor^m$ in $L^q(\Tor^m)$, $\forall 1< q \le p$ and the distance function converge pointwise
\begin{align}
    d_j(x,y) \rightarrow d_{\Tor^m}(x,y), \quad \forall x,y \in \Tor^m \setminus \Sigma^{m-p},
\end{align}
but the distance function $d_j(x, \cdot)$, $x \in \Sigma^{m-p}$ does not converge pointwise.
\end{ex}

Now we see an example which shows that a subsequence is necessary in Theorem \ref{MainTheoremConv} which is exactly Example 3.2 of \cite{Allen-Bryden-Second}.

\begin{ex}\label{Ex: Subsequence is Needed}
Define 
\begin{align}
    Q_k= \left\{ \lp\frac{n}{2^{k+1}}, \frac{m}{2^{k+1}}\rp: n,m = 1,...,2^k\right\},
\end{align}
so that $Q_k$ collects dyadic points at scale $2^{-(k+1)}$ in $(0,1)\times (0,1)$.
Define a sequence of points $x_j$ by listing the points in $Q_1$ first, then the points in $Q_2$, then $Q_3$, etc. For each point $x_j$, choose radius $r_j=\frac{1}{2^{k-2}}$ where $x_j$ belonged to the set $Q_k$.
Define 
\begin{align}
f_j(r)=
    \begin{cases}
    r_j^{-1} & \text{ on } B_{r_j}(x_j)
    \\ 1 &\text{ elsewhere}
    \end{cases},
\end{align} 
and observe that $f_j \rightarrow 1$ in $L^p(\Tor^2)$,  $1 \le p < 2$.  In contrast, consider $M_j=(\Tor^2,g_j=f_j^2g_0)$ so that  for all $x,y \in M$, $d_j(x,y)$ does not converge pointwise to $d_{\Tor^2}(x,y)$ but that on a subsequence we find
\begin{align}
    d_k(x,y) \rightarrow d_{\Tor^2}(x,y), \quad \forall x,y \in \Tor^2\setminus \{(1/2,1/2)\}.
\end{align}

\end{ex}

We now review an example given by the first named author and C. Sormani in \cite{Allen-Sormani-2} which shows that the lower bound on distances assumed in Theorem \ref{MainTheoremConv} is necessary to converge to a distance function for a Riemannian metric.
 \begin{ex} \label{Ex Cinched-Sphere}  
Define functions radially from the north pole of a sphere $\Sp^m$ by  
 \be
 f_j(r)=
 \begin{cases}
 1 & r\in[0,\pi/2- 1/j]
 \\  h_0 & r\in[\pi/2- 1/j, \pi/2+ 1/j]
 \\ 1 &r\in [\pi/2+ 1/j, \pi]
 \end{cases}
\ee
where $h_0\in (0,1)$. Then $M_j = (\Sp^m, f_j^2 g_{\Sp^m})$
\begin{align}
d_j(x,y) \rightarrow d_{\infty}(x,y), 
\end{align}
uniformly for all $x,y \in M$ but $M_{\infty}$ is not isometric to $\Sp^m$. Instead $M_{\infty} = (\Sp^m, f_{\infty}^2 g_{\Sp^m}) $ is the Rough Riemannian metric with conformal factor
 \be
 f_{\infty}(r)=
 \begin{cases}
 h_0 & r=\pi/2
 \\  1 &\text{ otherwise}
 \end{cases}.
 \ee
 The distances between pairs of points near the equator in this limiting metric space is achieved by geodesics which run to the
 equator, and then around inside the cinched equator, and then out again.  
\end{ex}



See \cite{LehmanUndergrad2} for additional examples of rough Riemannian manifolds satisfying convergence properties.

\section{Sobolev Inequalities Relating Rough Riemannian Metrics to their Distance Functions}\label{sec-Poincare}

We start this section by adapting the potential estimate of \cite{Allen-Bryden-Second} to the Rough Riemannian case which will ultimately be used to establish the Sobolev inequality of Theorem \ref{Main Sobolev Ineq}. The main difference in the proof is the need to use a Lipschitz function which almost achieves the supremum defining the distance function for Rough Riemannian metrics as opposed to the length of a curve which achieves the distance for a smooth Riemannian metric.

\begin{lem}\label{lem: distance potential estimate}
    Let $\varepsilon>0$ and $(M,g_{0})$ be some fixed $m$-dimensional Riemannian manifold with metric $g_{0}$, and let $h$ be a Rough Riemannian metric on $M$. Finally, let $d_{h}$ be the distance function determined by $h$. Then, we have
    \begin{equation}
        d_h(x,y)\le C(M,g_0,\varepsilon)\left(\int_{M}\frac{|h|^{\frac{1}{2}}_{g_{0}}dV_{g_0}(z)}{d_{g_{0}}(x,z)^{m-1}}+\int_{M}\frac{|h|^{\frac{1}{2}}_{g_{0}}dV_{g_0}(z)}{d_{g_{0}}(y,z)^{m-1}}\right).
    \end{equation}
\end{lem}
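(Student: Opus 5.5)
Fix $x,y\in M$ and $\delta>0$. By the definition of $d_h$, choose $w\in C^{0,1}(M)$ with $|\nabla^h w|_h\le 1$ a.e.\ and
\begin{align}
d_h(x,y)\le w(y)-w(x)+\delta .
\end{align}
The plan is to replace the length of a single curve, used in \cite{Allen-Bryden-Second}, by an average of $w\circ\gamma$ over a symmetric family of curves. Using Lemma \ref{NorNeighRadiusEst} and Lemma \ref{MainToolsForProof}, take a length minimizing $g_0$-geodesic $\alpha$ from $x$ to $y$ and a symmetric family of curves $U_{\bar\varepsilon}$ of width $\bar\varepsilon=\bar\varepsilon(M,g_0,\varepsilon)$, chosen uniformly in $x,y$ (for instance $\bar\varepsilon\le \min\{\varepsilon,\bar\varepsilon(K,I_0)/D_0\}$). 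Write $\gamma_{\tau,\vec s}(t)=\gamma(\tau,t,\vec s)$. Every curve $\gamma_{\tau,\vec s}$ runs from $x$ to $y$, so for every $(\tau,\vec s)\in\Gamma_{\bar\varepsilon}$
\begin{align}
w(y)-w(x)=\int_0^L \frac{d}{dt}\left(w\circ\gamma_{\tau,\vec s}\right)(t)\,dt\le \int_0^L |\gamma'_{\tau,\vec s}(t)|_h\,dt ,
\end{align}
by Lemma \ref{lem-RRMPreSetup}. Then Lemma \ref{NormComparisonLemma} and the bound $|\gamma'|_{g_0}\le C(K,I_0,D_0)$ from Lemma \ref{MainToolsForProof} give the pointwise bound $|\gamma'|_h\le C|h|_{g_0}^{1/2}(\gamma)$.

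Averaging over $\Gamma_{\bar\varepsilon}$, which has $\mu_{\bar\varepsilon}(\Gamma_{\bar\varepsilon})=c_m\bar\varepsilon^{m-1}$ depending only on $m$ and $\bar\varepsilon$, gives
\begin{align}
w(y)-w(x)\le \frac{C}{\mu_{\bar\varepsilon}(\Gamma_{\bar\varepsilon})}\int_{\Gamma_{\bar\varepsilon}\times[0,L]}|h|_{g_0}^{1/2}(\gamma(\tau,t,\vec s))\,dt\,d\mu_{\bar\varepsilon}.
\end{align}
Lemma \ref{Coarea Formula to Potential Integral}, applied with $f=|h|_{g_0}^{1/2}\circ\gamma$, bounds this by $C(M,g_0,\varepsilon)$ times the integral over $U_{\bar\varepsilon}\subset M$ of $|h|^{1/2}_{g_0}(z)\left(d_{g_0}(x,z)^{1-m}+d_{g_0}(y,z)^{1-m}\right)$. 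Enlarging the domain to $M$ and letting $\delta\to0$ yields the claim.

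The main obstacle is justifying the first inequality for \emph{every}, or at least a.e., curve of the family. Lemma \ref{lem-RRMPreSetup} holds only where $|\nabla^h w|_h\le1$, which is an a.e.\ statement in $M$, while single curves have measure zero. Since $w$ is Lipschitz, $w\circ\gamma_{\tau,\vec s}$ is absolutely continuous, so the fundamental theorem of calculus holds on each curve. For the derivative bound, I would use that the map $\gamma$ restricted to $(0,\bar\varepsilon)\times(0,L)\times\Sp^{m-2}$ is a diffeomorphism onto $U_{\bar\varepsilon}\setminus\alpha$, hence preserves null sets. The bad set $N\subset M$, where the gradient bound fails or $w$ is not differentiable (by Rademacher), therefore has $\gamma^{-1}(N)$ of measure zero in the parameter space. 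By Fubini, for $\mu_{\bar\varepsilon}$-a.e.\ $(\tau,\vec s)$, the curve meets $N$ only at a $dt$-null set of times. For such curves the chain rule $\frac{d}{dt}(w\circ\gamma)=dw(\gamma')$ holds for a.e.\ $t$, and the bound $|dw(\gamma')|\le|\nabla^hw|_h|\gamma'|_h\le|\gamma'|_h$ follows. The measure-zero exceptional set of curves does not affect the average, so the argument goes through.
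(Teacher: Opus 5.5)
Your proposal is correct and follows the paper's route: choose a near-optimal Lipschitz $w$, apply the fundamental theorem of calculus along each curve of a uniformly chosen symmetric family, bound $\left|\frac{d}{dt}(w\circ\gamma)\right|$ by $|\gamma'|_h\le |h|^{1/2}_{g_0}|\gamma'|_{g_0}$, average over $\Gamma$, and apply Lemma \ref{Coarea Formula to Potential Integral} before letting the approximation parameter tend to zero. Your extra Fubini argument over the foliation, which shows the a.e.\ bound $|\nabla^h w|_h\le 1$ holds at a.e.\ $t$ on a.e.\ curve, is a worthwhile addition: the paper's Lemma \ref{lem-RRMPreSetup} asserts this bound for an arbitrary single curve without justification.
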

\begin{proof}
    Fix $\varepsilon>0$ and let $U_{\vare}$ be a symmetric family of curves with width $\vare$ connecting $x$ to $y$, and let $\Gamma$ be the $m-1$ Euclidean ball of radius $\vare$ which parameterizes the curves foliating $U_{\vare}$. Given $\delta>0$, fix $x,y \in M$ and let $w_{x,y}(z):=w(z)$ be such that $d_h(x,y) \le |w(x)-w(y)|+ \delta$ so that $w \in C^{0,1}(M)$ and $|\nabla^h w|_h \le 1$ a.e. Then we can calculate
    \begin{align}
       \omega_{m-1}\vare^{m-1} d_{h}(x,y)&\le\omega_{m-1}\vare^{m-1} |w(x)-w(y)|+\delta\omega_{m-1}\vare^{m-1}
       \\&=\int_{\Gamma_{\varepsilon}}\left|\int_{[0,L]} \frac{d}{dt} \left(w\circ \eta \right)dt\right| d \mu+\delta\omega_{m-1}\vare^{m-1},
        \\&\le\int_{\Gamma_{\varepsilon}}\int_{[0,L]} \left|\frac{d}{dt} \left(w\circ \eta \right)\right|dt d \mu+\delta\omega_{m-1}\vare^{m-1},
       \end{align}
       where the last integral is well defined by Lemma \ref{lem-RRMPreSetup}.
       Continuing the calculation and using Lemma \ref{lem-RRMPreSetup} we find
       \begin{align}
\omega_{m-1}\vare^{m-1} d_{h}(x,y)&\leq\int_{\Gamma_{\varepsilon}}\int_{[0,L]}|\eta'|_hdtd \mu+\delta\omega_{m-1}\vare^{m-1}
       \\&\le\int_{\Gamma_{\varepsilon}}\int_{[0,L]}|h|_{g_{0}}^{\frac{1}{2}}|\eta'|_{g_{0}}+\frac{\delta}{L}dtd \mu,\label{EqImportant}
    \end{align}
    using Lemma \ref{NormComparisonLemma} in the last step.
    
     Plugging \eqref{EqImportant} into Lemma \ref{Coarea Formula to Potential Integral}, using Remark \ref{remark-TrivialTransformSubstitution}, and using the fact that $|\eta'|_{g_0}$ is uniformly bounded above from Lemma \ref{MainToolsForProof}, we get

     \begin{align}
        d_{h}(x,y)&\leq  C'(M,g_{0})\left(\int_{M}\frac{|h|^{\frac{1}{2}}_{g_{0}}dV_{g_0}(z)}{d_{g_{0}}(x,z)^{m-1}}+\int_{M}\frac{|h|^{\frac{1}{2}}_{g_{0}}dV_{g_0}(z)}{d_{g_{0}}(y,z)^{m-1}}+\frac{\delta}{L}\right).
    \end{align}

    Hence, by letting $\delta \rightarrow 0$ we find the desired conclusion
    \begin{align}
        d_{h}(x,y)
        \leq  C(M,g_{0})\left(\int_{M}\frac{|h|^{\frac{1}{2}}_{g_{0}}dV_{g_0}(z)}{d_{g_{0}}(x,z)^{m-1}}+\int_{M}\frac{|h|^{\frac{1}{2}}_{g_{0}}dV_{g_0}(z)}{d_{g_{0}}(y,z)^{m-1}}\right).
    \end{align}
\end{proof}

The next lemma adapts the proof of the previous lemma in the case that is most important for Theorem \ref{MainTheoremConv}. Since the distance function of the difference of two Riemmanian metrics is not equal to the difference of the distance functions, some care is needed in establishing the potential estimate for $d_h-d_{g_0}$ in terms of the potential applied to $|h-g_0|_{g_0}$.

\begin{lem}\label{lem: distance potential estimate with error}
    Let $(M,g_{0})$ be a Riemannian manifold, and let $h$ be a Rough Riemannian metric on $M$ such that $h(v,v)\geq g_{0}(v,v)$ for all $v \in T_pM$. Then there exists a constant $C(M,g_0,\vare)>0$ such that if $U_{\vare}$ is any symmetric family of curves of width $\vare$ connecting two points, say $x$ and $y$, then we have
    \begin{equation*}
        d_{h}(x,y)-d_{g_{0}}(x,y)-\vare\le C\left(\int_{M}\frac{|h-g_{0}|^{\frac{1}{2}}_{g_{0}}dV_{g_0}(z)}{d_{g_{0}}(x,z)^{m-1}}+\int_{M}\frac{|h-g_{0}|^{\frac{1}{2}}_{g_{0}}dV_{g_0}(z)}{d_{g_{0}}(y,z)^{m-1}}\right),
    \end{equation*}
    where $C=C(M,g_0,\vare)\rightarrow\infty$ as $\vare\rightarrow0$.
\end{lem}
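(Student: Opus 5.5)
The plan is to follow the proof of Lemma \ref{lem: distance potential estimate}, now comparing against $d_{g_0}$ via the lengths of the curves in the family.

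\textbf{Setup.} Fix $x,y$ and a symmetric family $U_{\vare}$ with parameter space $\Gamma_{\vare}$. Here $\vare$ should be chosen small enough (in terms of the target error) that Lemma \ref{uniform distance estimate symmetric family of curves} applies. This gives $\int_0^L|\eta'|_{g_0}\,dt\le d_{g_0}(x,y)+\vare$ for every curve $\eta$ of the foliation; if needed, relabel $2\vare$ as $\vare$. Given $\delta>0$, choose $w\in C^{0,1}(M)$ with $|\nabla^h w|_h\le 1$ a.e. and $d_h(x,y)\le |w(x)-w(y)|+\delta$. Averaging over $\Gamma_{\vare}$ and applying Lemma \ref{lem-RRMPreSetup} exactly as before gives
\[
|\Gamma_{\vare}|\,d_h(x,y)\le \int_{\Gamma_{\vare}}\int_0^L |\eta'|_h\,dt\,d\mu+\delta|\Gamma_{\vare}|.
\]

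\textbf{Key pointwise inequality.} Next I would split $|\eta'|_h = |\eta'|_{g_0}+\bigl(|\eta'|_h-|\eta'|_{g_0}\bigr)$. Since $h\ge g_0$, the difference is nonnegative, and
\[
|\eta'|_h-|\eta'|_{g_0}=\frac{(h-g_0)(\eta',\eta')}{|\eta'|_h+|\eta'|_{g_0}}\le \sqrt{(h-g_0)(\eta',\eta')}\le |h-g_0|_{g_0}^{1/2}|\eta'|_{g_0}.
\]
The middle step uses $a-b\le\sqrt{a^2-b^2}$ for $a\ge b\ge0$. The last step is Lemma \ref{NormComparisonLemma} applied to the nonnegative symmetric tensor $h-g_0$; that lemma's proof only uses $(h-g_0)(v,v)\le |h-g_0|_{g_0}|v|^2_{g_0}$, which holds for any symmetric tensor.

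\textbf{Integrating.} The $|\eta'|_{g_0}$ term integrates along each curve to at most $d_{g_0}(x,y)+\vare$, by the length estimate from the setup. For the remaining term, I would use the bound $|\eta'|_{g_0}\le C$ from Lemma \ref{MainToolsForProof}, then apply Lemma \ref{Coarea Formula to Potential Integral} with $f=|h-g_0|_{g_0}^{1/2}$ to obtain the two potential integrals. Dividing by $|\Gamma_{\vare}|=\omega_{m-1}\vare^{m-1}$ and letting $\delta\to0$ gives the claim. The constant is $C(M,g_0)\vare^{1-m}$, which blows up as $\vare\to0$, as the statement says.

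\textbf{Main obstacle.} The delicate point is measurability. The integrand $|h-g_0|^{1/2}$ is only defined a.e., while the curves have measure zero. So the pointwise inequality must be understood for $\mu$-a.e. curve and a.e. $t$. This is justified because $\gamma$ is a diffeomorphism off $\{x,y\}$, so null sets in $U_{\vare}$ pull back to null sets in $\Gamma_{\vare}\times[0,L]$. The same care is already implicit in Lemma \ref{lem-RRMPreSetup}.
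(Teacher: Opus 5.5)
Your proposal is correct and follows essentially the paper's argument. The steps match: choose the width $\delta(\vare)$ from Lemma \ref{uniform distance estimate symmetric family of curves}, and use an almost-maximizing $w$ with Lemma \ref{lem-RRMPreSetup} to reduce to $\int_{\Gamma}\int_0^L\bigl(|\eta'|_h-|\eta'|_{g_0}\bigr)\,dt\,d\mu$. Then bound that integrand by $|h-g_0|_{g_0}^{1/2}|\eta'|_{g_0}$ via $\sqrt{a}-\sqrt{b}\le\sqrt{a-b}$, and finish with Lemma \ref{Coarea Formula to Potential Integral}, which gives the constant $C(M,g_0)\delta(\vare)^{1-m}$. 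Your a.e.-curve measurability remark makes explicit what the paper leaves implicit; just write the width as $\delta(\vare)$ rather than relabeling it $\vare$.
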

\begin{proof}
    Given $\vare>0$ we can use Lemma \ref{uniform distance estimate symmetric family of curves} to pick $\delta(\vare)=\delta>0$ such that if $U_{\delta}$ is a symmetric family of curves connecting any two points, say $x$ and $y$, then we have
    \begin{equation}
        \int_{[0,L]}|\Dot{\eta}|_{g_{0}}dt\leq d_{g_{0}}(x,y)+\vare,
    \end{equation}
    where $\eta$ is any of the curves foliating $U_{\delta}$. 

    Given $\lambda>0$, fix $x,y \in M$ and let $w_{x,y}(z):=w(z)$ be such that $d_h(x,y) \le |w(x)-w(y)|+ \lambda$ so that for $\nabla ^h w = h^{ij} \frac{\partial w}{\partial x^i}\frac{\partial}{\partial x^j}$ defined in coordinates we see that $w \in C^{0,1}(M)$ and $|\nabla^h w|_h \le 1$ a.e. Using the fundamental theorem of calculus we find    
    \begin{align}
       &\omega_{m-1}\delta^{m-1}( d_{h}(x,y)-d_{g_{0}}(x,y)-\vare)
        \\& =  \omega_{m-1}\delta^{m-1}( |w(y)-w(x)|+\lambda-d_{g_{0}}(x,y)-\vare)
       \\ &\leq\int_{\Gamma_{\delta}}\left|\int_0^L\frac{d}{dt}(w \circ \eta)(t)dt\right|-\int_0^L\sqrt{g_{0}(\Dot{\eta},\Dot{\eta})}dt\mu_{\delta}+\omega_{m-1}\delta^{m-1}(\lambda - \varepsilon)
       \\ &\leq\int_{\Gamma_{\delta}}\int_0^L\left|\frac{d}{dt}(w \circ \eta)(t)\right|-\sqrt{g_{0}(\Dot{\eta},\Dot{\eta})}dt\mu_{\delta}+\omega_{m-1}\delta^{m-1}\lambda 
       \end{align}
        where the last integral is well defined by Lemma \ref{lem-RRMPreSetup}. Continuing the calculation using Lemma \ref{lem-RRMPreSetup} we find
       \begin{align}
       &\int_{\Gamma_{\delta}}\int_0^L\left|\frac{d}{dt}(w \circ \eta)(t)\right|-\sqrt{g_{0}(\Dot{\eta},\Dot{\eta})}dt\mu_{\delta}+\omega_{m-1}\delta^{m-1}\lambda
       \\&\leq\int_{\Gamma_{\delta}}\int_0^L\sqrt{h(\Dot{\eta},\Dot{\eta})}-\sqrt{g_{0}(\Dot{\eta},\Dot{\eta})} + \frac{\lambda}{L}dt\mu_{\delta}
        \\
        &\le\int_{\Gamma_{\delta}}\int_0^L\sqrt{(h-g_{0})(\Dot{\eta},\Dot{\eta})}+ \frac{\lambda}{L}dt\mu_{\delta},\label{Eq-LastofGroup}
    \end{align}
    which holds since, by assumption, $|h|_{g_{0}}\geq |g_{0}|_{g_{0}}$. Now, we may estimate the first term in \eqref{Eq-LastofGroup} by
    \begin{equation}
        \int_{\Gamma_{\delta}}\int_{[0,L]}\sqrt{(h-g_{0})(\Dot{\eta},\Dot{\eta})}dt\mu_{\delta}\leq\int_{\Gamma_{\delta}}\int_{[0,L]}|h-g_{0}|_{g_{0}}^{\frac{1}{2}}|\Dot{\eta}|_{g_{0}}dt\mu_{\delta}.
    \end{equation}
    Thus, we have
    \begin{equation}
        \omega_{m-1}\delta^{m-1}( d_{h}(x,y)-d_{g_{0}}(x,y)-\vare)\leq \int_{\Gamma_{\delta}}\int_{[0,L]}|h-g_{0}|_{g_{0}}^{\frac{1}{2}}|\Dot{\eta}|_{g_{0}}+ \frac{\lambda}{L}dt\mu_{\delta}.\label{Eq-Important 2}
    \end{equation}
    
     Plugging \eqref{Eq-Important 2} into Lemma \ref{Coarea Formula to Potential Integral}, using Remark \ref{remark-TrivialTransformSubstitution}, and using the fact that $|\eta'|_{g_0}$ is uniformly bounded above from Lemma \ref{MainToolsForProof}, we find
    \begin{equation}
        \omega_{m-1}\delta^{m-1}\left(d_{h}(x,y)-d_{g_{0}}(x,y)-\vare\right)
    \end{equation}
    is bounded by
    \begin{equation*}
        C\left(M,g_{0}\right)\left(\int_{M}|h-g_{0}|^{\frac{1}{2}}_{g_0}\left(d_{g_{0}}(x,y)^{1-m}+d_{g_{0}}(y,z)^{1-m}\right)dV_{g_0}(z)+\frac{\lambda}{L}\right).
    \end{equation*}
    So, by letting $\lambda \rightarrow 0$ and then dividing both sides by $\omega_{m-1}\delta^{m-1}(\vare)$ we find
    \begin{align}
         d_{h}(x,y)-d_{g_{0}}(x,y)-\vare\leq& C(M,g_0)\delta(\vare)^{1-m}\int_{M}\frac{|h-g_{0}|^{\frac{1}{2}}_{g_{0}}dV_{g_0}(z)}{d_{g_{0}}(x,z)^{m-1}}
         \\
         &+C(M,g_0)\delta(\vare)^{1-m}\int_{M}\frac{|h-g_{0}|^{\frac{1}{2}}_{g_{0}}dV_{g_0}(z)}{d_{g_{0}}(y,z)^{m-1}}.     
    \end{align}
    Setting $C(M,g_0,\varepsilon):=C(M,g_0)\delta(\vare)^{1-m}$ we find the desired result.
\end{proof}
We now turn the potential estimates above into integral estimates. The first half of the theorem is a restatement of Theorem \ref{Main Sobolev Ineq} and the second half of the theorem is used to prove Theorem \ref{MainTheoremConv}.
\begin{thm}\label{thm-potentialToIntegral}
    Let $(M,g_{0})$ be a fixed Riemannian manifold with metric $g_{0}$, and let $h$ be a Rough Riemannian metric on $M$. Then, we have
    \begin{equation}
        \|d_{h}(x,y)\|_{L^{q}(M\times M, g_{0}\oplus g_{0})}\le C(M,g_0,q)\|h\|^{\frac{1}{2}}_{L^{\frac{p}{2}}(M,g_{0})},
    \end{equation}
    with $q<\frac{np}{n-p}$.
    
    Suppose $\varepsilon>0$, $h\geq g_{0}$, and let $f_{\vare}(x,y)=\max\lbrace0,d_{h}(x,y)-d_{g_{0}}(x,y)-\vare\rbrace$. Then, we have
    \begin{equation}
        \|f_{\vare}(x,y)\|_{L^{q}(M\times M,g_{0}\oplus g_{0})}\leq C(\vare,M,g_0,q)\|h-g_{0}\|^{\frac{1}{2}}_{L^{\frac{p}{2}}(M,g_{0})},
    \end{equation}
    with $q<\frac{np}{n-p}$.
\end{thm}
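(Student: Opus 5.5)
The plan is to combine the pointwise potential estimates of Lemma \ref{lem: distance potential estimate} and Lemma \ref{lem: distance potential estimate with error} with the mapping property of the potential operator in Proposition \ref{PotentialOperatorEstimate}, applied to $f=|h|_{g_0}^{\frac12}$ and $f=|h-g_0|_{g_0}^{\frac12}$ respectively. Both halves of the theorem follow the same template.

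For the first half, fix $\varepsilon$ below the threshold of Lemma \ref{MainToolsForProof}, so that $C(M,g_0,\varepsilon)$ becomes a constant depending only on $(M,g_0)$. Lemma \ref{lem: distance potential estimate} then gives, for every $x,y\in M$,
\begin{align}
d_h(x,y)\le C\left((V_{g_0}f)(x)+(V_{g_0}f)(y)\right),\qquad f=|h|_{g_0}^{\frac12}.
\end{align}
Next I take the $L^q(M\times M,g_0\oplus g_0)$ norm and use the triangle inequality. Each term depends on only one variable, so, for instance,
\[
\|(V_{g_0}f)(x)\|_{L^q(M\times M)}=\vol_{g_0}(M)^{1/q}\|V_{g_0}f\|_{L^q(M)}.
\]
Proposition \ref{PotentialOperatorEstimate}, applied with exponent $p$ for $f$ and $q<\frac{mp}{m-p}$, bounds this by $C(M,g_0,q)\|f\|_{L^p}$. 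Finally I observe that
\[
\|f\|_{L^p}=\left(\int_M|h|_{g_0}^{p/2}\,dV_{g_0}\right)^{1/p}=\|h\|_{L^{p/2}}^{1/2},
\]
which is exactly the exponent bookkeeping that produces $\|h\|^{\frac12}_{L^{\frac p2}}$. Here $n$ in the statement is read as $m$, and $p<m$ is needed so that the Proposition applies. When $|h|_{g_0}^{1/2}\notin L^p$ the right-hand side is infinite and there is nothing to prove.

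For the second half I run the same argument with Lemma \ref{lem: distance potential estimate with error}. Since the right-hand side there is nonnegative, the inequality $d_h-d_{g_0}-\varepsilon\le C(\ldots)$ also gives
\[
f_\varepsilon(x,y)=\max\{0,d_h-d_{g_0}-\varepsilon\}\le C(M,g_0,\varepsilon)\left((V_{g_0}\tilde f)(x)+(V_{g_0}\tilde f)(y)\right),\qquad \tilde f=|h-g_0|_{g_0}^{\frac12}.
\]
Taking $L^q$ norms over $M\times M$ and applying Proposition \ref{PotentialOperatorEstimate} again gives the bound $C(\varepsilon,M,g_0,q)\|h-g_0\|^{\frac12}_{L^{\frac p2}}$. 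The $\varepsilon$-dependence enters only through the constant $\delta(\varepsilon)^{1-m}$ from Lemma \ref{lem: distance potential estimate with error}.

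The main obstacle is a measurability and integrability check rather than any estimate: the $L^q$ norm on $M\times M$ must make sense. For the first half, $d_h$ is a supremum of Lipschitz functions $w(y)-w(x)$, so it is lower semicontinuous and hence Borel on $M\times M$. Continuity of $d_{g_0}$ then makes $f_\varepsilon$ measurable as well. One also needs $h\ge g_0$ to hold almost everywhere, which is the sense in which the hypothesis of Lemma \ref{lem: distance potential estimate with error} is used. Beyond these points, the remaining work is the Fubini/product-norm step and tracking the constants.
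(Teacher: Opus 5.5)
Your proposal is correct and follows the paper's proof essentially step for step: the pointwise potential bounds from Lemma \ref{lem: distance potential estimate} and Lemma \ref{lem: distance potential estimate with error}, a splitting of the two one-variable terms (you use Minkowski, while the paper uses $(a+b)^q\le C(q)(a^q+b^q)$, which also covers $q<1$, where your triangle inequality holds only up to a constant), Fubini to pick up a factor of $\vol_{g_0}(M)$, and Proposition \ref{PotentialOperatorEstimate} with $f=|h|_{g_0}^{1/2}\in L^p$. Your extra remarks on the measurability of $d_h$ and on passing to $\max\{0,\cdot\}$ fill in details the paper leaves implicit.
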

\begin{proof}
    We prove only the first inequality, since the proof of the second is nearly identical using Lemma \ref{lem: distance potential estimate with error} instead of Lemma \ref{lem: distance potential estimate}. By applying Lemma \ref{lem: distance potential estimate} we can calculate that
    \begin{equation}
        \|d_{h}\|^q_{L^{q}}\leq C(\vare)\int_{M\times M}\left(\int_{M}\frac{|h|^{\frac{1}{2}}_{g_{0}}dV_{g_0}(z)}{d_{g_{0}}(x,z)^{m-1}}+\int_{M}\frac{|h|^{\frac{1}{2}}_{g_{0}}dV_{g_0}(z)}{d_{g_{0}}(y,z)^{m-1}}\right)^{q}dV_{g_0}(x)dV_{g_0}(y).
    \end{equation}
    The term on the right above may be estimated by
    \begin{equation}
        C(\vare,q)\int_{M\times M}\left(\int_{M}\frac{|h|^{\frac{1}{2}}_{g_{0}}dV_{g_0}(z)}{d_{g_{0}}(x,z)^{m-1}}\right)^{q}+\left(\int_{M}\frac{|h|^{\frac{1}{2}}_{g_{0}}dV_{g_0}(z)}{d_{g_{0}}(y,z)^{m-1}}\right)^qdV_{g_0}(x)dV_{g_0}(y).
    \end{equation}
    Next, we see that the above is bounded by
    \begin{align}
        &\vol_{g_{0}}(M)\int_{M}\left(\int_{M}\frac{|h|^{\frac{1}{2}}_{g_{0}}dV_{g_0}(z)}{d_{g_{0}}(x,z)^{m-1}}\right)^qdV_{g_0}(x)
        \\&\qquad +\vol_{g_{0}}(M)\int_{M}\left(\int_{M}\frac{|h|^{\frac{1}{2}}_{g_{0}}dV_{g_0}(z)}{d_{g_{0}}(y,z)^{m-1}}\right)^qdV_{g_0}(y)
        \\&=2\vol_{g_0}(M)\|(V_{g_0}|h|_{g_0}^{\frac{1}{2}})\|_{L^q}^q,
    \end{align}
    and so by applying Proposition \ref{PotentialOperatorEstimate} when $q < \frac{m-p}{mp}$ we see that
    \begin{equation}
        \|d_{h}(x,y)\|_{L^{q}(M\times M, g_{0}\oplus g_{0})}\le C(M,g_0,q,\varepsilon)\|h\|^{\frac{1}{2}}_{L^{\frac{p}{2}}(M,g_{0})},
    \end{equation}
    where we are allowed to fix an $\varepsilon$ in order to remove the dependence on $\varepsilon$ in the constant. When repeating this argument for $f_{\varepsilon}$, the $\varepsilon$ dependence remains throughout the inequality.
\end{proof}

Using the Theorem \ref{thm-potentialToIntegral} we can now establish the convergence result of Theorem \ref{MainTheoremConv}. The key to the proof is applying Lemma \ref{Lp Hausdorff dimension control} which establishes that the Hausdorff dimension of the set on which the integral on the right hand side of Theorem \ref{thm-potentialToIntegral} is not well controlled is less than $m$. Hence the set where the distance function does not converge pointwise will have measure zero.

\begin{thm}
Let $M$ be a connected, closed, and oriented manifold, $g_0$ a smooth Riemannian manifold and $g_j$ a sequence of Rough Riemannian manifolds. If
\begin{align}
    g_j(v,v) \ge \left(1-\frac{1}{j} \right) g_0(v,v), \quad \forall p\in M, v \in T_pM,
\end{align}
and
\begin{align}
    \|g_j-g_0\|_{L^p_{g_0}(M)}\rightarrow 0,\quad p\ge\frac{1}{2},
\end{align}
then there exists a subsequence $g_k$ and there exists $U \subset M$ measurable so that $\vol_{g_0}(U)=\vol_{g_0}(M)$  and $\mathrm{dim}_{H_{g_0}}(M \setminus U)\le m-2p$ and
\begin{align}
    d_k(x,y) \rightarrow d_0(x,y), \quad \forall x,y \in U.
\end{align}
Furthermore, for every $\tau > 0$ we can find a measurable set $U_{\tau}\subset M$ so that $\vol_{g_0}(M \setminus U_{\tau})< \tau$ and 
\begin{align}
    d_k(x,y) \rightarrow d_0(x,y),
\end{align}
uniformly $\forall x,y \in U_{\tau}$.
\end{thm}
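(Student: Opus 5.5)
We prove the convergence in two halves: an upper bound $\limsup_k d_k\le d_0$ off a small set, and a lower bound $\liminf_k d_k\ge d_0$ everywhere. Throughout, $\|g_j-g_0\|_{L^p_{g_0}}\to 0$ with $p\ge \frac12$, so $f_j:=|g_j-g_0|_{g_0}^{1/2}$ tends to zero in $L^{2p}(g_0)$.

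\emph{Subsequence and exceptional set.} First pass to a subsequence $g_k$ with $\sum_k\|f_k\|^{2p}_{L^{2p}}<\infty$. Set $B=\bigcup_{n}B(1/n)$, where $B(\delta)$ is the set from Proposition \ref{Lp Hausdorff dimension control} applied to $f_k$ with exponent $2p$. That proposition gives $\mathrm{dim}_{H_{g_0}}(B(\delta))\le m-2p$ for each $\delta$. Hence $\mathrm{dim}_{H_{g_0}}(B)\le m-2p$, and in particular $\vol_{g_0}(B)=0$ whenever $2p>0$. Define $U=M\setminus B$. Then for every $x\in U$,
\begin{align}
\limsup_{k\to\infty}\int_M \frac{f_k(z)}{d_{g_0}(x,z)^{m-1}}dV_{g_0}(z)=0 .
\end{align}

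\emph{Lower bound.} We need $\liminf_k d_k(x,y)\ge d_0(x,y)$ for all $x,y$. The hypothesis $g_k\ge (1-\frac1k)g_0$ gives $|\nabla^{g_0}w|_{g_0}\le (1-\frac1k)^{-1/2}|\nabla^{g_k}w|_{g_k}$ almost everywhere. So take $w=(1-\frac1k)^{1/2}d_{g_0}(x,\cdot)$, which is Lipschitz with $|\nabla^{g_0}w|_{g_0}\le(1-\frac1k)^{1/2}$. Then $|\nabla^{g_k}w|_{g_k}\le 1$ almost everywhere, and this $w$ is admissible in the definition of $d_{g_k}$. It follows that
\begin{align}
d_k(x,y)\ge (1-\tfrac1k)^{1/2}d_0(x,y).
\end{align}
Checking the pointwise comparison of dual norms requires the usual linear algebra $h\ge cg_0\Rightarrow h^{-1}\le c^{-1}g_0^{-1}$.

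\emph{Upper bound.} Lemma \ref{lem: distance potential estimate with error} is stated for $h\ge g_0$, which we do not have. I would first try to redo its proof directly for $g_k$. Along each curve $\eta$ one has
\begin{align}
\sqrt{g_k(\dot\eta,\dot\eta)}\le\sqrt{g_0(\dot\eta,\dot\eta)}+\sqrt{|(g_k-g_0)(\dot\eta,\dot\eta)|},
\end{align}
since $\sqrt{a}\le\sqrt{b}+\sqrt{|a-b|}$ for $a,b\ge0$. This inequality holds without any lower bound, and $|(g_k-g_0)(\dot\eta,\dot\eta)|\le |g_k-g_0|_{g_0}|\dot\eta|^2_{g_0}$. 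Inserting this into the same chain of estimates (Lemmas \ref{lem-RRMPreSetup}, \ref{uniform distance estimate symmetric family of curves}, \ref{Coarea Formula to Potential Integral}, \ref{MainToolsForProof}) gives, for every $\vare>0$,
\begin{align}
d_k(x,y)\le d_0(x,y)+\vare+C(M,g_0,\vare)\bigl(V_{g_0}f_k(x)+V_{g_0}f_k(y)\bigr).
\end{align}
For $x,y\in U$ we take $\limsup_k$ and then let $\vare\to0$, which gives $\limsup_k d_k\le d_0$. Combined with the lower bound, this proves pointwise convergence on $U$.

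\emph{Uniform convergence.} Since $V_{g_0}f_k\to0$ pointwise almost everywhere, Egorov's theorem produces $U_\tau$ with $\vol_{g_0}(M\setminus U_\tau)<\tau$ on which $V_{g_0}f_k\to0$ uniformly; we may take $U_\tau\subset U$. The displayed upper bound then holds uniformly in $x,y\in U_\tau$. The lower bound is already uniform, because $d_0\le\diam(M_0)$.

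\emph{Main obstacle.} The hardest step is the upper bound without assuming $h\ge g_0$. Specifically, one must check that the elementary square-root inequality above suffices, and that the constant $C(M,g_0,\vare)$ is independent of $k$, so that we may pass to the limit in $k$ before sending $\vare\to0$.
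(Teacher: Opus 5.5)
Your proposal is correct and has the same skeleton as the paper's proof: a summable subsequence, Proposition \ref{Lp Hausdorff dimension control} applied to $f_k=|g_k-g_0|_{g_0}^{1/2}$ in $L^{2p}$, the symmetric-family potential estimate for the upper bound, the metric lower bound for the lower bound, and Egorov. Where you differ from the paper, you are repairing its argument rather than departing from it.

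\begin{itemize}
\item \emph{Upper bound.} The paper invokes Lemma \ref{lem: distance potential estimate with error}, whose hypothesis is $h\ge g_0$, although the theorem only gives $g_k\ge(1-\frac1k)g_0$. Your inequality $\sqrt a\le\sqrt b+\sqrt{|a-b|}$, combined with $|(g_k-g_0)(\dot\eta,\dot\eta)|\le|g_k-g_0|_{g_0}|\dot\eta|_{g_0}^2$, shows that the lemma's conclusion needs no lower bound at all. It also keeps the same $k$-independent constant $C(M,g_0)\delta(\vare)^{1-m}$, so the order of limits ($k\to\infty$, then $\vare\to0$) is legitimate.
\item \emph{Lower bound.} The paper only asserts $(1-\frac1k)d_{g_0}\le d_{g_k}$. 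You derive the stronger bound $(1-\frac1k)^{1/2}d_{g_0}\le d_{g_k}$ from the dual definition of the distance.
\item \emph{Uniform convergence.} The paper applies Egorov's theorem to $d_k$ itself. On $M\times M$ this produces a set of small complement that need not be of the form $U_\tau\times U_\tau$. Your route applies Egorov to the one-variable potentials $V_{g_0}f_k$ and inserts the result into the two-point estimate. That does give uniform convergence on $U_\tau\times U_\tau$, and the lower bound is uniform because $d_0\le\diam(M_0)$.
\end{itemize}

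One slip to fix: the displayed comparison in your lower-bound step is reversed. From $g_k^{-1}\le(1-\frac1k)^{-1}g_0^{-1}$ one gets $|\nabla^{g_k}w|_{g_k}\le(1-\frac1k)^{-1/2}|\nabla^{g_0}w|_{g_0}$. This is the direction you actually need, and use, to conclude $|\nabla^{g_k}w|_{g_k}\le 1$ for $w=(1-\frac1k)^{1/2}d_{g_0}(x,\cdot)$.
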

\begin{proof}
    Since $\|g_{j}-g_{0}\|_{L^{p}_{g_{0}}(M)}\rightarrow 0$, we can find a sub-sequence $g_k$ so that 
    \begin{equation}
        \sum^{\infty}_{k=1}\|g_{k}-g_{0}\|^p_{L^{p}_{g_{0}}(M)}<\infty.
    \end{equation}
    Now, let $f_{k}=\sqrt{|g_{k}-g_{0}|_{g_{0}}}$ a.e. Thus the sequence $f_{k}$ satisfies the hypothesis of Proposition \ref{Lp Hausdorff dimension control}. Therefore, for any $\delta>0$, we have that $\mathrm{dim}_{H_{g_0}}B(\delta)\le m-2p$.
    
    Let $D=\bigcup^{\infty}_{n=1} B\left(\frac{1}{n}\right)$. Since the Hausdorff dimension of each $B(\frac{1}{n})$ is bounded above by $m-2p$, it follows that the Hausdorff dimension of $D$ is less than $m-2p$. Let $x$ and $y$ be in $U:=M\setminus D$. Then, by the definition of $D$, we see that for $x,y$ we have
    \begin{equation}
        \lim_{k\rightarrow\infty}\int_M\frac{f_{k}(z)}{d_{g_0}(x,z)^{m-1}}+\frac{f_{k}(z)}{d_{g_0}(y,z)^{m-1}}dV_{g_0}(z)=0.
    \end{equation}
    From Lemma \ref{lem: distance potential estimate with error}, we have for any $\vare>0$ that
    \begin{equation}
        d_{g_{k}}(x,y)-d_{g_{0}}(x,y)-\vare\le C(\vare)\left(\int_M\frac{f_{k}(z)}{d_{g_0}(x,z)^{m-1}}+\frac{f_{k}(z)}{d_{g_0}(y,z)^{m-1}}dV_{g_0}(z)\right).
    \end{equation}
    Since $x$ and $y$ are in $U=M\setminus D$, it follows that the right hand side of the above tends towards zero as $k\rightarrow\infty$. Thus, for each $x$ and $y$ in $U$, and each $\vare>0$, we have
    \begin{equation}
        \limsup_{k\rightarrow\infty}d_{g_{k}}(x,y)\le d_{g_{0}}(x,y)+\vare.
    \end{equation}
    Finally, we let $\vare$ tend to zero. Thus, for $x$ and $y$ in $U$, we have
    \begin{equation}
        \limsup_{k\rightarrow\infty}d_{g_{k}}(x,y)\le d_{g_0}(x,y).
    \end{equation}
    However, the distance below estimate for each $g_k$ implies that $\left(1-\frac{1}{k}\right)d_{g_0} \le d_{g_k}$ on $M$ which implies that we in fact have $\displaystyle\lim_{k \rightarrow \infty} d_{g_k}(x,y)=d_{g_0}(x,y)$.
    Then by Egeroff's theorem, for any $\tau>0$ we can find a set $U_{\tau}$ so that $ U_{\tau} \subset M$, $\vol(M \setminus U_{\tau})<\tau$ and $d_k$ converges uniformly on $U_{\tau}$, as desired.
\end{proof}

We finish with a proof of Theorem \ref{MainThmEst} where again the symmetric family of curves is exploited to obtain a H\"{o}lder estimate on the distance function. One can key into the importance of $p>m$ in the proof for concluding that the integral of the distortion of volume of the symmetric family of curves is bounded.

\begin{proof}[Proof of Theorem \ref{MainThmEst}]

Given $\vare>0$ we can use Lemma \ref{uniform distance estimate symmetric family of curves} to pick $\delta(\vare)=\delta>0$ such that if $U_{\delta}$ is a symmetric family of curves connecting any two points, say $x$ and $y$, then we have
    \begin{equation}
        \int_{[0,L]}|\Dot{\eta}|_{g_{0}}dt\leq d_{g_{0}}(x,y)+\vare,
    \end{equation}
    where $\eta$ is any of the curves foliating $U_{\delta}$. 

    Given $\lambda>0$, fix $x,y \in M$ and let $w_{x,y}(z):=w(z)$ be such that $d_h(x,y) \le |w(x)-w(y)|+ \lambda$ so that for $\nabla ^h w = h^{ij} \frac{\partial w}{\partial x^i}\frac{\partial}{\partial x^j}$ defined in coordinates we see that $w \in C^{0,1}(M)$ and $|\nabla^h w|_h \le 1$ a.e. Using the fundamental theorem of calculus we find    
\begin{align}
\vol(\Gamma_{\delta})& d_{h}(x,y)
\\&\le \vol(\Gamma_{\delta}) (|w(x)-w(y)|+\lambda)
\\&\le \int_{\Gamma_{\delta}} \left|\int_0^L\frac{d}{dt}(w \circ \eta)(t)dt\right| d\mu_{\delta}d\tau +\lambda \vol(\Gamma_{\delta})
\\&\le \int_{\Gamma_{\delta}} \int_0^L\left|\frac{d}{dt}(w \circ \eta)(t)\right|dt d\mu_{\delta}d\tau +\lambda \vol(\Gamma_{\delta}).
\end{align}
Continuing the calculation using Lemma \ref{lem-RRMPreSetup}, allowing $\lambda \rightarrow 0$ since $w$ no longer appears, we find
       \begin{align}
       &\vol(\Gamma_{\delta}) d_{h}(x,y)
\\&\le \int_{\Gamma_{\delta}} \int_0^L \sqrt{h(\eta',\eta')} dt d \mu_{\delta}
\\&\le \int_{\Gamma_{\delta}} \int_0^L |h|_{g_0}^{\frac{1}{2}}|\eta'|_{g_0} dt d \mu_{\delta}
\\&\le C(M,g_0,\delta) L^{1-m} \int_{U_{\varepsilon}} |h|_{g_0}^{\frac{1}{2}}\lp\sin\lp\frac{\pi}{L}t\rp\rp^{1-m}dV_{g_0}. \label{WEpsilonMiddle}
\end{align}

By moving $\vol(\Gamma_{\delta})$ to the other side and using Holder's inequality with $p>1$, $q = \frac{p}{p-1}$
\begin{align}
 d_{h}(x,y)
 &\le C(M,g_0,\delta)\vol(\Gamma_{\delta})^{-1}L^{1-m}  \lp\int_{U_{\delta}} |h|_{g_0}^{\frac{p}{2}}dV_{g_0}\rp^{1/p} 
 \\&\quad \cdot\lp \int_{U_{\delta}}\sin\lp\frac{\pi}{L}t \rp^{-\frac{p(m-1)}{p-1}} dV_{g_0} \rp^{\frac{p-1}{p}}.\label{WEpsilonEnd}
\end{align}

By using the coarea formula again we see that
\begin{align}
&\lp\int_{U_{\delta}}\sin\lp\frac{\pi}{L}t \rp^{-\frac{p(m-1)}{p-1}} dV_{g_0} \rp^{\frac{p-1}{p}} 
\\&\le C(M,g_0,\delta)L^{\frac{(m-1)(p-1)}{p}}\lp\int_0^L \sin\lp\frac{\pi}{L}t\rp ^{-\frac{p(m-1)}{p-1}+m-1}  dt\rp^{\frac{p-1}{p}}
\end{align}
where one can see the details worked out in \cite{Allen}.

So if we choose $p > m$ then the integral of the power of $\sin$ is integrable and by changing variables we find
\begin{align}
&\lp\int_0^L \sin\lp\frac{\pi}{L}t\rp ^{-\frac{p(m-1)}{p-1}+m-1}  dt\rp^{\frac{p-1}{p}} 
\\& =L^{\frac{p-1}{p}}\lp\int_0^1 \sin\lp\pi s\rp ^{-\frac{p(m-1)}{p-1}+m-1}  ds\rp^{\frac{p-1}{p}}\le L^{\frac{p-1}{p}}\tilde{C}_{p}.\label{TrigIntegrable}
\end{align}

Furthermore, by collecting all powers of $L$ we know that
\begin{align}
&\frac{p-1}{p}+\frac{(m-1)(p-1)}{p}+1-m=\frac{p-m}{p},\label{FinalPower}
\end{align}

Now by choosing $\delta$, which can be done uniformly for any $x,y \in M$ by Lemma \ref{NorNeighRadiusEst}, we know that $C(M,g_0,\delta)\le C(M,g_0)$.
Putting \eqref{WEpsilonEnd}, \eqref{TrigIntegrable}, and \eqref{FinalPower} together we find 
\begin{align}
d_{h}(x,y) \le L^{\frac{p-m}{p}}\tilde{C}(M,g_0) \lp\int_{U_{\delta}} |h|_{g_0}^{\frac{p}{2}}dV_{g_0}\rp^{1/p}.\label{OneFromTheEnd}
\end{align}

Then by substituting $L = d_{g_0}(q_1,q_2)$ in \eqref{OneFromTheEnd} and using the fact that
\begin{align}
\lp\int_{U_{\varepsilon}} |h|_{g_0}^{\frac{p}{2}}dV_{g_0}\rp^{1/p} \le \lp\int_{M} |h|_{g_0}^{\frac{p}{2}}dV_{g_0}\rp^{1/p}
\end{align}
we find the desired bound on distance.
\end{proof}

\bibliographystyle{alpha}
\bibliography{bibliography}
\end{document}